\documentclass[a4paper,10pt,leqno]{amsart}
\usepackage{amsmath,amsfonts,amsthm,amssymb,dsfont}
\usepackage[alphabetic]{amsrefs}
\usepackage[OT4]{fontenc}
\usepackage{enumerate}
\usepackage{mathrsfs}
\usepackage{mathtools}
\usepackage{enumerate}
\usepackage{float}
\usepackage[dvipsnames]{xcolor}

\newtheorem{thm}{Theorem}[section]
\newtheorem{lem}[thm]{Lemma}
\newtheorem{prop}[thm]{Proposition}

\newtheorem{theoremalph}{Theorem}

\theoremstyle{definition}
\newtheorem{rem}[thm]{Remark}
\newtheorem{defin}[thm]{Definition}

\newcommand{\Z}{\mathbb{Z}}
\newcommand{\R}{\mathbb{R}}

\makeatletter
\def\paragraph{\@startsection{paragraph}{4}%
  \z@\z@{-\fontdimen2\font}%
  {\normalfont\bfseries}}
\makeatother

\begin{document}

\title[Abelian subgroups of two-dimensional Artin groups]{Abelian subgroups of two-dimensional Artin groups}

\author[A.~Martin]{Alexandre Martin$^{\dag*}$}
           \address{Department of Mathematics and the Maxwell Institute for Mathematical Sciences,
           Heriot-Watt University,
           Riccarton,
           EH14 4AS Edinburgh, United Kingdom}
           \email{alexandre.martin@hw.ac.uk}
           \thanks{$\dag$ Partially supported by EPSRC New Investigator Award EP/S010963/1.}

\author[P.~Przytycki]{Piotr Przytycki$^{\ddagger*}$}
\address{
Department of Mathematics and Statistics,
McGill University,
Burnside Hall,
805 Sherbrooke Street West,
Montreal, QC,
H3A 0B9, Canada}

\email{piotr.przytycki@mcgill.ca}
\thanks{$\ddagger$ Partially supported by NSERC, FRQNT, and National Science Centre, Poland UMO-2018/30/M/ST1/00668.}
\thanks{$*$ This work was partially supported by
the grant 346300 for IMPAN from the Simons Foundation and the matching
2015--2019 Polish MNiSW fund}

\maketitle

\begin{abstract}
We classify abelian subgroups of two-dimensional Artin groups.
\end{abstract}

\section{Introduction}

Let $S$ be a finite set and for all $s\neq t\in S$ let $m_{st}=m_{ts}\in \{2,3,\ldots, \infty\}$.
The associated \emph{Artin group} $A_S$ is given by generators and relations:
$$ A_S = \langle S \ | \ \underbrace{sts\cdots}_{m_{st}}=\underbrace{tst\cdots}_{m_{st}}\rangle.$$ 
Assume that $A_S$ is \textit{two-dimensional}, that is, for all $s, t, r \in S$, we have
$$\frac{1}{m_{st}} + \frac{1}{m_{tr}}+\frac{1}{m_{sr}} \leq 1.$$
We say that $A_S$ is of \textit{hyperbolic type} if the inequality above is strict for all $s, t, r \in S$.

In \cite[Thm~D]{MP} we classified explicitly all virtually abelian subgroups of $A_S$ of hyperbolic type. In this article we extend this, using different techniques, to all two-dimensional $A_S$. Charney and Davis \cite[Thm~B]{CD} showed that two-dimensional Artin groups satisfy the $K(\pi,1)$-conjecture, in particular they are torsion-free and of cohomological dimension $2$. Thus all noncyclic virtually abelian subgroups of~$A_S$ are virtually~$\Z^2$. The first step of our classification involves the \textit{modified Deligne complex} $\Phi$ (see Section~\ref{sec:complex}) introduced by Charney and Davis \cite{CD}, and generalising a construction of Deligne for Artin groups of spherical type \cite{Del}.

\begin{theoremalph}
\label{thm:classify}
Let $A_S$ be a two-dimensional Artin group, and let $H$ be a subgroup of $A_S$ that is virtually
$\Z^2$. Then:
\begin{enumerate}[(i)]
\item $H$ is contained in the stabiliser of a vertex of $\Phi$, or
\item $H$ is contained in the stabiliser of a standard tree of $\Phi$, or
\item $H$ acts properly on a Euclidean plane isometrically embedded in $\Phi$.
\end{enumerate}
\end{theoremalph}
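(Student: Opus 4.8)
The plan is to analyse the action of $H$ on the CAT(0) complex $\Phi$ and to split into three cases according to how much of $H$ acts elliptically. First I would pass to a finite-index subgroup $H_0\cong\Z^2$. Since $\Phi$ is piecewise Euclidean with only finitely many isometry types of cells (the $A_S$-action being cocompact), a theorem of Bridson ensures that every element of $A_S$, and in particular of $H_0$, is a semisimple isometry; so each nontrivial $h\in H_0$ is either elliptic, with nonempty convex fixed-point set $\mathrm{Fix}(h)$, or hyperbolic, with an axis. Two structural facts about $\Phi$, to be extracted from Section~\ref{sec:complex}, drive everything: the pointwise stabiliser of a chamber is trivial, so $\dim\mathrm{Fix}(h)\le 1$ for $h\neq 1$; and a one-dimensional such fixed set is a standard tree. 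From these I would first show that the elliptic elements of $H_0$ form a subgroup $E_0$ admitting a global fixed set. Indeed, if $h,h'\in H_0$ are elliptic then, as they commute, $h'$ preserves the convex set $\mathrm{Fix}(h)$, and projecting a point of $\mathrm{Fix}(h')$ onto $\mathrm{Fix}(h)$ (nearest-point projection commutes with $h'$) yields a point of $\mathrm{Fix}(h)$ fixed by $h'$, whence a common fixed point and the ellipticity of $hh'$. Iterating this projection argument over a generating set produces a nonempty convex set $F:=\mathrm{Fix}(E_0)$, of dimension $\le 1$ and, since $H_0$ is abelian, $H_0$-invariant. The proof then branches on $\mathrm{rank}(E_0)\in\{0,1,2\}$.

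If $\mathrm{rank}(E_0)\ge 1$, everything is read off from $F$, which is moreover $H$-invariant: as $H$ normalises $H_0$ and conjugation preserves ellipticity, $H$ normalises $E_0$ and hence preserves $F$. When $\dim F=0$, $F$ is a single vertex and $H$ fixes it, giving alternative (i). When $\dim F=1$, $F$ is a standard tree and $H\le\St(F)$, giving alternative (ii). (In the former case one checks that $F=\{v\}$ forces every $g\in H_0$ to fix $v$, hence to be elliptic, so $H_0=E_0$; this is consistent and needs no separate argument.)

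If $\mathrm{rank}(E_0)=0$, no nontrivial element of $H_0$ is elliptic and I would build an invariant flat along the lines of the Flat Torus Theorem. Writing $\mathrm{Min}(a)=\R\times C_a$ for a generator $a$, a second generator $b$ preserves this splitting; if $b$ acted elliptically on $C_a$ one would obtain an $H_0$-invariant geodesic line on which $H_0$ acts by translations. Since the nonzero translation lengths in $\Phi$ are bounded below by a positive constant (finitely many shapes), the resulting homomorphism $H_0\to\R$ would have cyclic image, hence a rank-one kernel consisting of elliptic elements, contradicting $\mathrm{rank}(E_0)=0$. Therefore $b$ acts hyperbolically on $C_a$, producing a convex, isometrically embedded $\mathbb{E}^2$ on which $H_0$ acts as a lattice of translations, that is, properly and cocompactly. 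As $\mathrm{Min}(H_0)$ and the span of the $H_0$-translation directions are $H$-invariant and $H/H_0$ is finite, $H$ inherits a proper action on a Euclidean plane, which is alternative (iii).

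I expect the main obstacle to be not the soft CAT(0) bookkeeping above but the two structural inputs about $\Phi$: that the fixed-point set of a nontrivial elliptic isometry is at most one-dimensional and, when one-dimensional, is exactly a standard tree, together with enough control on the flats in $\Phi$ to guarantee that the rank-zero case lands genuinely in (iii). These require the fine combinatorial geometry of the modified Deligne complex rather than general nonpositive-curvature arguments, and understanding how the elliptic fixed trees and the hyperbolic axes interact is where the real difficulty lies.
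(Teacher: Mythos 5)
Your proposal is correct and follows essentially the same route as the paper's proof: pass to a finite-index $\Z^2$, invoke Bridson's semisimplicity, and trichotomise according to the elliptic part of that subgroup, landing in a vertex stabiliser, the stabiliser of a standard tree (via the uniqueness of the standard tree through an edge), or an invariant flat. The only substantive differences are that you re-derive by hand the pieces of the Flat Torus Theorem that the paper simply cites from \cite[Thm~II.7.20]{BH} (common fixed points of commuting elliptics via projections; the splitting $\mathrm{Min}(a)=\R\times C_a$ plus discreteness of translation lengths in the all-loxodromic case), and that you should take $H_0$ to be a \emph{normal} finite-index subgroup (its normal core), since you later use that $H$ normalises $E_0$ and preserves $\mathrm{Fix}(E_0)$.
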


The stabilisers of the vertices of $\Phi$, appearing in (i), are cyclic or conjugates of the dihedral Artin groups $A_{st}$ (see Section~\ref{sec:complex}), which are virtually $\Z\times F$ for a free group $F$, see e.g.\ \cite[Lem~4.3(i)]{HJP}.
The stabilisers of the standard trees of $\Phi$, appearing in (ii), are also $\Z\times F$ \cite[Lem~4.5]{MP} and were described more explicitly in \cite[Rm~4.6]{MP}.

In the second step of our classification, we will list all $H\cong \Z^2$ satisfying (iii). Note that the statement might seem daunting, but in fact it arises in a straightforward way from reading off the labels of the Euclidean planes obtained in the course of the proof.

We use the following notation. Let $S$ be an alphabet. If $s\in S$, then $s^\bullet$ denotes the language (i.e.\ set of words) of form $s^n$ for $n\in \Z-\{0\}$. We treat a letter $s\in S^\pm$ as a language consisting of a single word. If $\mathcal L, \mathcal L'$ are languages, then $\mathcal L\mathcal L'$ denotes the language of words of the form $ww'$ where $w\in \mathcal L, w'\in \mathcal L'$. If $\mathcal L$ is a language, then $\mathcal L^*$ denotes the union of the languages $\mathcal L^n$ for $n\geq 1$.

\begin{theoremalph}
\label{thm:main} Let $A_S$ be a two-dimensional Artin group.
Suppose that $\Z^2\subset A_S$ acts properly on a Euclidean plane isometrically embedded in $\Phi$. Then $\Z^2$ is conjugated into:
\begin{enumerate}[(a)]
\item $\langle w, w'\rangle$, where $w\in A_T,w'\in A_{T'}$ and $m_{tt'}=2$ for all $t\in T,t'\in T'$, for some disjoint $T,T'\subset S$, \\
    or $\Z^2$ is conjugated into one of the following, where $s,t,r\in S$:
\item $\langle strstr, w\rangle$, where $w\in (t^\bullet str)^*$ and $m_{st}=m_{tr}=m_{sr}=3$.
\item $\langle strt, w\rangle$, where $w\in (r^\bullet t^{-1}s^\bullet t)^*$ and $m_{st}=m_{tr}=4,m_{sr}=2$.
\item $\langle stsrtr, w\rangle$, where $w\in (t^\bullet str)^*$ and $m_{st}=m_{tr}=4,m_{sr}=2$.
\item $\langle ststsrtstr, w\rangle$, where $w\in (t^\bullet ststr)^*$ and $m_{st}=6,m_{tr}=3,m_{sr}=2$.
\item $\langle tststr, w\rangle$, where $w\in (s^\bullet tstrt^{-1})^*$ and $m_{st}=6,m_{tr}=3,m_{sr}=2$.
\end{enumerate}
\end{theoremalph}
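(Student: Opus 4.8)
The plan is to analyse the given flat $E\cong\R^2$ directly as a subcomplex of $\Phi$. A proper action of $\Z^2$ on $E$ by isometries must be by two linearly independent translations, hence cocompact, so $E/\Z^2$ is a flat torus tiled by finitely many cells. Since $\Phi$ is $2$-dimensional and $E$ is a $2$-dimensional flat, I would first show that $E$ is a union of closed $2$-cells of $\Phi$: at an interior point of a $2$-cell the plane coincides locally with that cell, while the link of an interior edge point consists of two points joined by arcs of length $\pi$, one per incident $2$-cell, so a flat through that point uses exactly two of these arcs and crosses the edge straight. Propagating these two observations shows that $E$ is tiled by the Euclidean triangles that form the $2$-cells of $\Phi$, with total angle $2\pi$ around each vertex of $E$.

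Next I would set up the angle bookkeeping. Recall from \cite{CD} that the $2$-cell of $\Phi$ associated to a pair $\{s,t\}$ is a Euclidean triangle with angle $\frac{\pi}{2m_{st}}$ at the rank-$2$ vertex (of type $gA_{st}$), angle $\frac{\pi}{2}$ at the rank-$1$ vertex (of type $gA_s$), and angle $\frac{\pi}{2}\left(1-\frac{1}{m_{st}}\right)$ at the rank-$0$ vertex (of type $g$); the two-dimensionality hypothesis is exactly the statement that the $6$-cycles in the links of rank-$0$ vertices have length $\geq 2\pi$. Imposing total angle $2\pi$ around each vertex of $E$ then yields the local conditions: exactly $4m_{st}$ triangles around each rank-$2$ vertex $gA_{st}$; exactly $4$ triangles around each rank-$1$ vertex; and $\sum\left(1-\frac{1}{m_{s_it_i}}\right)=4$ over the triangles around each rank-$0$ vertex. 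Each of these is a statement about isometrically embedded $2\pi$-circles in the corresponding vertex link, and I would classify the admissible cyclic vertex figures accordingly.

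With the local data in hand I would assemble the global picture, which splits into two cases. If every rank-$2$ vertex of $E$ carries label $m_{st}=2$, then $A_{st}\cong\Z^2$ there and $E$ is a product of two lines; tracking the two factors produces commuting elements $w\in A_T$ and $w'\in A_{T'}$ with $m_{tt'}=2$ for all $t\in T$, $t'\in T'$, which is case (a). Otherwise some rank-$2$ vertex has $m_{st}\geq 3$, and then the rank-$0$ equation $\sum\left(1-\frac{1}{m}\right)=4$, evaluated at a rank-$0$ vertex whose incident triangles involve three generators $s,t,r$, forces $\frac{1}{m_{st}}+\frac{1}{m_{tr}}+\frac{1}{m_{sr}}=1$, whose only integer solutions are the Euclidean triples $(3,3,3)$, $(4,4,2)$ and $(6,3,2)$. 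In each case $E$ is the flat preserved by the affine Artin subgroup $\langle s,t,r\rangle$ (of type $\widetilde A_2$, $\widetilde C_2$ and $\widetilde G_2$ respectively), and a rigidity argument propagating the vertex figures shows that this local structure extends uniquely over all of $E$, so there are only finitely many combinatorial types, the two extra types for $(4,4,2)$ and $(6,3,2)$ reflecting the two ways of tiling the non-equilateral flats. These give cases (b)--(f).

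Finally, for each combinatorial type I would trace two independent translation directions of the $\Z^2$-action through the triangle tiling and read off the corresponding words in $s,t,r$, recovering the explicit generators and the periodic languages such as $(t^\bullet str)^*$ listed in (a)--(f). I expect the main obstacle to be the passage from the local angle conditions to this global classification: proving that once a Euclidean triple appears the whole flat is forced to be the standard affine plane, with no mixing of triples and no transition to a product region. This rests on a careful understanding of the links of the rank-$2$ vertices, which are the bipartite graphs governed by the dihedral Artin groups $A_{st}$ and in which a flat traverses a $4m_{st}$-cycle whose rank-$1$ vertices carry an alternating $s/t$ pattern reflecting the relation $\underbrace{sts\cdots}_{m_{st}}=\underbrace{tst\cdots}_{m_{st}}$; once the global combinatorial type is pinned down, the translation into explicit words is routine bookkeeping.
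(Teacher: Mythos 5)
Your first two steps (the flat is a subcomplex, and the angle bookkeeping forcing either an all-square tiling or the Davis-complex tiling for a Euclidean triple $\{3,3,3\}$, $\{4,4,2\}$, $\{6,3,2\}$) match the paper's Lemma~\ref{lem:tiling} and are fine, as is the derivation of case (a). But there is a genuine gap in the passage to cases (b)--(f): you treat the identification of the generators as ``routine bookkeeping'' once the combinatorial type of the tiling is known, and it is not. The edges of the tiling carry exponents --- an edge of type $\{s\}$ joins cosets $g$ and $gs^k$ with $k\geq 1$ --- and the boundary of each $2m_{st}$-gon is a trivial word of syllable length $2m_{st}$ in the dihedral group $A_{st}$. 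Such words are \emph{not} the alternating pattern you describe from the Coxeter relation: by the paper's Lemma~\ref{lem:AS2} (proved by a van Kampen diagram/strip analysis following Appel--Schupp), they are forced to have the form $s^k\underbrace{t\cdots s}_{m-1}t^{-k}\underbrace{s^{-1}\cdots t^{-1}}_{m-1}$, so each polygon has exactly one distinguished pair of opposite ``poles'' and (at most) one pair of opposite edges carrying an exponent $k>1$. The entire content of the theorem's cases (b)--(f) --- that one generator is a fixed word with all exponents $\pm 1$ while the other ranges over a language such as $(t^\bullet str)^*$ with a single $\bullet$-ed letter per period --- encodes the claim that these distinguished poles and long edges line up into a single globally parallel family across the whole plane. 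Your proposal contains no mechanism for proving this global coherence.

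The paper devotes its Sections~\ref{sec:preliminaries}--\ref{sec:main} to exactly this point: it records the pole data as a \emph{polarisation} (a choice of longest diagonal in each $2$-cell), shows the induced polarisation is admissible (each vertex on exactly one chosen diagonal, Lemma~\ref{lem:adm}, which already needs a nontrivial orientation/gradient argument using cocompactness), and then classifies admissible polarisations case by case (Propositions~\ref{prop:333}, \ref{prop:244}, \ref{prop:236}) to conclude that all chosen diagonals are parallel to a fixed edge $e$ and invariant under a translation $\rho$. Only then can one read off the generators. Relatedly, your remark that the two subcases for $\{4,4,2\}$ and $\{6,3,2\}$ reflect ``two ways of tiling the non-equilateral flats'' is off the mark: the tiling is the same in both subcases; the two options come from which edge family the distinguished direction $e$ (equivalently, the family of possibly-long edges) is parallel to. So you have identified the wrong step as the main obstacle --- the rigidity of the tiling is the easy half-page argument, while the control of exponents via the dihedral relation structure and a global parallelism argument is the missing core of the proof.
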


It is easy to check directly that the above groups are indeed abelian. Since $A_S$ is torsion-free \cite[Thm~B]{CD}, the only other subgroups of $A_S$ that are virtually~$\Z^2$ are isomorphic to the fundamental group of the Klein bottle. They can be also classified, see Remark~\ref{rem:klein}.

In the proof of Theorem~\ref{thm:main} we will describe in detail the Euclidean planes in~$\Phi$ stabilised by $\Z^2\subset A_S$. Huang and Osajda established properties of arbitrary quasiflats in the Cayley complex of $A_S$, and one can find similarities between our results and \cite[\S5.1—5.2 and~Prop~8.3]{OH2}.

\smallskip

\paragraph{Organisation of the article.} In Section~\ref{sec:complex} we describe the modified Deligne complex $\Phi$ of Charney and Davis and we prove Theorem~\ref{thm:classify}. In Section~\ref{sec:preliminaries} we prove a lemma on dihedral Artin groups fitting in the framework of \cite{AS}. In Section~\ref{sec:pola} we introduce a \emph{polarisation} method for studying Euclidean planes in~$\Phi$. We finish with the classification of admissible polarisations and the proof of Theorem~\ref{thm:main} in Section~\ref{sec:main}.

\section{Modified Deligne complex}
\label{sec:complex}
Let $A_S$ be a two-dimensional Artin group. For $s,t\in S$ satisfying $m_{st}<\infty$, let $A_{st}$ be the \emph{dihedral} Artin group $A_{S'}$ with $S'=\{s,t\}$ and exponent $m_{st}$. For $s\in S$, let $A_s=\Z$.

Let $K$ be the following simplicial complex. The vertices of $K$ correspond to subsets $T\subseteq S$ satisfying $|T|\leq 2$ and, in the case where $|T|=2$ with $T=\{s,t\}$, satisfying $m_{st}<\infty$. We call $T$ the \emph{type} of its corresponding vertex. Vertices of types $T,T'$ are connected by an edge of $K$, if we have $T\subsetneq T'$ or vice versa. Similarly, three vertices span a triangle of $K$, if they have types $\emptyset, \{s\}, \{s,t\}$ for some $s,t\in S$.

We give $K$ the following structure of a simple complex of groups $\mathcal K$ (see \cite[\S II.12]{BH} for background). The vertex groups are trivial, $A_s$, or $A_{st}$, when the vertex is of type $\emptyset,\{s\},\{s,t\}$, respectively. For an edge joining a vertex of type $\{s\}$ to a vertex of type $\{s,t\}$, its edge group is $A_s$; all other edge groups and all triangle groups are trivial. All inclusion maps are the obvious ones. It follows directly from the definitions that $A_S$ is the fundamental group of $\mathcal K$.

We equip each triangle of $K$ with the \emph{Moussong metric} of an Euclidean triangle of angles $\frac{\pi}{2m_{st}}, \frac{\pi}{2}, \frac{(m_{st}-1)\pi}{2m_{st}}$ at the vertices of types $\{s,t\},\{s\},\emptyset$, respectively. As explained in \cite[\S3]{MP},
the local developments of $\mathcal K$ are $\mathrm{CAT}(0)$ and hence $\mathcal K$ is strictly developable and its development $\Phi$ exists and is $\mathrm{CAT}(0)$. See \cite{CD} for a detailed proof. We call $\Phi$ with the Moussong metric the \emph{modified Deligne complex}. In particular all $A_s$ and $A_{st}$ with $m_{st}<\infty$ map injectively into $A_S$ (which follows also from \cite[Thm 4.13]{L}). Vertices of $\Phi$ inherit types from the types of the vertices of $K$.

Let $r\in S$ and let $T$ be the fixed-point set in $\Phi$ of $r$. Note that since $A_S$ acts on~$\Phi$ without inversions, $T$ is a subcomplex of~$\Phi$. Since the stabilisers of the triangles of~$\Phi$ are trivial, we have that $T$ is a graph. Since $\Phi$ is $\mathrm{CAT}(0)$, $T$ is convex and thus it is a tree. Thus we call a \textit{standard tree} the fixed-point set in $\Phi$ of a conjugate of a generator $r\in S$ of~$A_S$.

\begin{rem}[{\cite[Rm~4.4]{MP}}]
\label{rem:trees unique} Each edge of $\Phi$ belongs to at most one standard tree.
\end{rem}

\begin{proof}[{Proof of Theorem~\ref{thm:classify}}]
Let $\Gamma\subset H$ be a finite index normal subgroup isomorphic to~$\Z^2$. By \cite{B}, $\Gamma$ acts on $\Phi$ by semi-simple isometries.
Let $\mathrm{Min}(\Gamma)=\bigcap_{\gamma\in \Gamma}\mathrm{Min}(\gamma)$, where $\mathrm{Min}(\gamma)$ is the Minset of $\gamma$ in $\Phi$. By a variant of the Flat Torus Theorem not requiring properness \cite[Thm~II.7.20(1)]{BH}, $\mathrm{Min}(\Gamma)$ is nonempty.
By \cite[Thm~II.7.20(4)]{BH} we have that $H$ stabilises $\mathrm{Min}(\Gamma)$.

Suppose first that each element of $\Gamma$ fixes a point of $\Phi$. Then $\Gamma$ acts trivially on $\mathrm{Min}(\Gamma)$. By the fixed-point theorem \cite[Thm~II.2.8(1)]{BH} the finite group $H/\Gamma$ fixes a point of $\mathrm{Min}(\Gamma)$, and since the action is without inversions, we can take this point to be a vertex as required in (i).

Secondly, suppose that $\Gamma$ has both an element $\gamma$ that fixes a point of $\Phi$ and an element that is loxodromic. Then $\mathrm{Min}(\Gamma)$ is not a single point, so it contains an edge $e$. Since $\mathrm{Min}(\Gamma)\subset\mathrm{Fix}(\gamma)$, we have that $\gamma$ fixes $e$. Thus $\gamma$ is a conjugate of an element of $S$ and so $\mathrm{Min}(\Gamma)$ is contained in a standard tree $T$. For any $h\in H$ we have that the intersection $h(T)\cap T$ contains $\mathrm{Min}(\Gamma)\supset e$ and thus by Remark~\ref{rem:trees unique} we have $h\in \mathrm{Stab}(T)$, as required in (ii).

Finally, suppose that all elements of $\Gamma$ are loxodromic. By \cite[Thm~II.7.20(1,4)]{BH} we have $\mathrm{Min}(\Gamma)=Y\times\R^n$ with $H$ preserving the product structure and $\Gamma$ acting trivially on $Y$. As before $H/\Gamma$ fixes a point of $Y$ and so $H$ stabilises $\R^n$ isometrically embedded in $\Phi$. By \cite[Thm~II.7.20(2)]{BH}, we have $n\leq 2$, but since $H$ acts by simplicial isometries, we have $n=2$ and the action is proper, as required in (iii).
\end{proof}

\section{Girth lemma}
\label{sec:preliminaries}

\begin{lem}
\label{lem:AS2}
Let $S=\{s,t\}$ with $m_{st}\geq 3$. A word with $2m$ syllables (i.e.\ of form $s^{i_1}t^{j_1}\cdots s^{i_m}t^{j_m}$ with all $i_k,j_k\in \Z-\{0\}$) is trivial in $A_S$ if and only if up to interchanging $s$ with $t$, and a cyclic permutation, it is of the form:
\begin{itemize}
\item $s^k\underbrace{t\cdots s}_{m-1}t^{-k}\underbrace{s^{-1}\cdots t^{-1}}_{m-1}$ for $m$ odd,
\item
$s^k\underbrace{ts\cdots t}_{m-1}s^{-k}\underbrace{t^{-1}s^{-1}\cdots t^{-1}}_{m-1}$ for $m$ even,
\end{itemize}
where $k\in \Z-\{0\}$.
\end{lem}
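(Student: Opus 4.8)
The plan is to analyze the dihedral Artin group $A_S$ with $m=m_{st}\geq 3$ via its action on a tree, or more precisely via the structure of geodesic words, exploiting that $A_S$ is known to be a free-by-cyclic (indeed $\Z\times F$) group and that its relator has a transparent geometric meaning. Concretely, I would pass to the Cayley complex, or equivalently use the modified Deligne complex $\Phi$ restricted to the rank-two case, where $A_S$ acts on a $\mathrm{CAT}(0)$ space whose geometry reflects the relation $\underbrace{sts\cdots}_{m}=\underbrace{tst\cdots}_{m}$. A trivial word of $2m$ syllables corresponds to a closed loop, and the content of the lemma is a girth statement: the shortest nontrivial closed syllable-loops have exactly $2m$ syllables, and they are exactly the listed ones up to the stated symmetries.

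The key steps I would carry out, in order, are as follows. First, I would set up a normal form or a Garside-type/geodesic normal form for dihedral Artin groups, so that every element has a controlled syllable length and I can talk about the minimal number of syllables representing the identity along a closed path. Second, I would establish the lower bound: any nonempty reduced alternating syllable word that is trivial must have at least $2m$ syllables. For this I would use the $\mathrm{CAT}(0)$ geometry of $\Phi$ in type $\{s,t\}$, where the link of the central vertex is a bipartite graph of girth $2m$ (this is precisely why the Moussong angles are $\tfrac{\pi}{2m_{st}}$), so the girth lemma is literally a translation of the girth of this link into syllable language. Third, for the extremal case of exactly $2m$ syllables I would extract, from the geodesic realizing the girth, that the loop must traverse the boundary of a single $2m$-gon (an apartment/polygon corresponding to one defining relation), forcing the word to be, up to cyclic permutation and the $s\leftrightarrow t$ symmetry, a conjugate of the relator $r=\underbrace{sts\cdots}_{m}\underbrace{t^{-1}s^{-1}\cdots}_{m}$; the powers $k$ enter because one may insert $s^k$ and a balancing $t^{-k}$ (respectively $s^{-k}$) at the two "poles" without changing the syllable count, which is exactly the parity-dependent shape recorded in the two bullet points.

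The arithmetic of the parity split is where the two displayed forms come from: for $m$ odd the relator reads $s t s \cdots s$ versus $t s t\cdots t$ so the word closes up as $s^k\,\underbrace{t\cdots s}_{m-1}\,t^{-k}\,\underbrace{s^{-1}\cdots t^{-1}}_{m-1}$, whereas for $m$ even the two sides end in different letters and one gets $s^k\,\underbrace{ts\cdots t}_{m-1}\,s^{-k}\,\underbrace{t^{-1}s^{-1}\cdots t^{-1}}_{m-1}$. I would verify directly that each listed word is trivial (this is the easy direction, just the defining relation with the central power moved across), and then argue that no other syllable pattern of length $2m$ can be trivial by the girth/extremality argument above.

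The hard part, I expect, will be the rigidity in the extremal case: showing that a trivial $2m$-syllable word is \emph{forced} into the listed normal form rather than merely bounding its length. The girth lower bound is clean from $\mathrm{CAT}(0)$ link geometry, but ruling out all other ways a $2m$-syllable word could collapse—tracking how syllables can merge or cancel and confirming that the only trivial ones are the conjugates of a single relator with a central power inserted—will require either a careful combinatorial analysis of geodesics in the Cayley graph of $A_{st}$ or a clean appeal to the small-cancellation/flat structure in \cite{AS}, which is presumably the purpose of placing this lemma in that framework.
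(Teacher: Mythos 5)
Your outline correctly isolates the three ingredients (the easy ``if'' direction, a girth lower bound, and rigidity in the extremal case), but the actual content of the lemma --- the rigidity --- is not proved; you explicitly defer it to ``either a careful combinatorial analysis of geodesics \ldots or a clean appeal to \cite{AS}'' without executing either. Worse, the geometric picture you propose for the extremal case is wrong: a trivial $2m$-syllable word does \emph{not} in general traverse the boundary of a single relator $2m$-gon. For $|k|\geq 2$ the listed words bound a reduced van~Kampen diagram that is a \emph{ladder} of $|k|$ relator cells glued in a row (this is exactly the paper's Figure~\ref{Figure_lox_0}), so extracting the normal form from ``a single $2m$-gon'' would only recover the case $k=\pm 1$ and would miss precisely the feature (the central power $s^k$) that makes the classification nontrivial. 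The remark that ``one may insert $s^k$ and a balancing $t^{-k}$ at the two poles'' is an observation about the answer, not an argument that nothing else of syllable length $2m$ can be trivial; ruling out, say, words whose diagrams branch or whose strips attach differently is the whole difficulty, and nothing in the proposal addresses it.

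For comparison, the paper's proof does not pass through $\mathrm{CAT}(0)$ links at all (the logical dependency in fact runs the other way: the girth of the link is what one proves \emph{from} statements like this one). It inducts on the size of a reduced van~Kampen diagram $M$ for the word, using the Appel--Schupp strip machinery: $M$ contains two strips sharing no regions, the outer boundary of a strip has syllable length at least $m+1$ while the inner boundary has at most $m-1$, and since both strips must fit inside a $2m$-syllable boundary word both bounds are forced to be equalities. Removing one strip and applying the induction hypothesis, then checking the (very constrained) ways the strip can be reglued, forces $M$ to be the ladder of Figure~\ref{Figure_lox_0} and hence the word to be on the stated list. If you want to salvage your approach, you would need to replace the ``single $2m$-gon'' claim by a genuine classification of all combinatorial geodesic bigons/girth cycles, which amounts to redoing the strip analysis in different language.
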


\begin{proof}
The `if' part follows immediately from Figure~\ref{Figure_lox_0}.
We prove the `only if' part by induction on the size of any reduced (van~Kampen) diagram $M$ of the word $w$ in question, where we prove the stronger assertion that, up to interchanging $s$ with~$t$, $M$ is as in Figure~\ref{Figure_lox_0}.

\begin{figure}[H]
\begin{center}
 \scalebox{1}{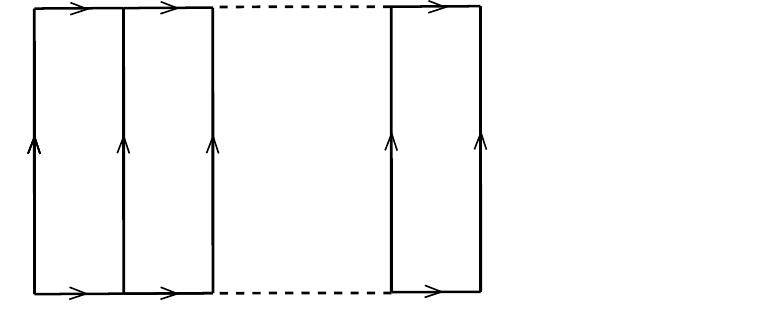}
\caption{The diagram $M$. The top arrows are either all labelled~$s$ or all labelled $t$, depending on the parity of $m$.}
\label{Figure_lox_0}
\end{center}
\end{figure}

We use the vocabulary from~\cite{AS}, where the $2$-cells of $M$ are called \emph{regions} and the \emph{interior degree} $i(D)$ of a region is the number of interior edges of $\partial D$ (after forgetting vertices of valence $2$). For example the two extreme regions in Figure~\ref{Figure_lox_0} have interior degree $1$. A region $D$ is a \emph{simple boundary region} if $\partial D\cap \partial M$ is nonempty, and $M-\overline D$ is connected. For example, the two extreme regions in Figure~\ref{Figure_lox_0} are simple boundary regions, but the remaining ones are not. A \emph{singleton strip} is a simple boundary region with $i(D)\leq 1$. A \emph{compound strip} is a subdiagram $R$ of $M$ consisting of regions $D_1,\ldots, D_n,$ with $n\geq 2$, with $D_{k-1}\cap D_k$ a single interior edge of $R$ (after forgetting vertices of valence $2$), satisfying $i(D_1)=i(D_n)=2,i(D_k)=3$ for $1<k<n$ and $M-R$ connected.

Let $R$ be a strip of $M$ with boundary labelled by $rb$, where $r$ labels $\partial R\cap \partial M$ and so $w=rw'$. Assume also that $R$ shares no regions with some other strip (such a pair of strips exists by \cite[Lem~2]{AS}). By \cite[Lem~5]{AS}, we have that the syllable lengths satisfy $||r||\geq ||b||+2$ and so by \cite[Lem~6]{AS}, we have
%Lemma~\ref{lem:AS}
$||r||\geq m+1$, hence $||w'||\leq m+1$.
In fact, since the outside boundary of the other strip has also syllable length $\geq m+1$, we have $||r||=m+1$, and hence $||b||=m-1$. Let $M'$ be the diagram with boundary labelled by $b^{-1}w'$ obtained from $M$ by removing~$R$. By the induction hypothesis, $M'$ is as in Figure~\ref{Figure_lox_0}. If $R$ is a singleton strip, then there is only one way of gluing $R$ to $M'$ to obtain $||w||=2m$ and it is as in Figure~\ref{Figure_lox_0}. If $R$ is a compound strip, then by the induction hypothesis $R$ is also as in Figure~\ref{Figure_lox_0}. Moreover, since all regions of $R$ share exactly one edge with $M'$, up to interchanging $s$ with $t$, and/or $b$ with $b^{-1}$, we have $b=s^k\underbrace{ts\cdots }_{m-2}$ or $b=\underbrace{\cdots st}_{m-2}s^k$, where $k\geq 2$. Since $m>2$, we have that $b$ cannot be a subword of the boundary word of $M'$, unless $M'$ is a mirror copy of~$R$, contradiction.
\end{proof}

\begin{rem}
\label{rem:AS}
Let $S=\{s,t\}$ with $m_{st}=2$. A word with $4$ syllables is trivial in $A_S=\Z^2$ if and only if up to interchanging $s$ with $t$ it is of the form $s^kt^ls^{-k}t^{-l}$, where $k,l\in \Z-\{0\}$.
\end{rem}

\section{Polarisation}
\label{sec:pola}

\begin{defin}
Let $F$ be a Euclidean plane isometrically embedded in $\Phi$. Then for each vertex $v$ in $F$ of type $\{s,t\}$ there are exactly $4m_{st}$ triangles in $F$ incident to $v$. We assemble them into regular $2m_{st}$-gons, and call this complex the \emph{tiling of~$F$}. We say that a cell of this tiling has \emph{type $T$} if its barycentre in $\Phi$ has type $T$.
\end{defin}

For a Coxeter group $W$, let $\Sigma$ denote its \emph{Davis complex}, i.e.\ the complex obtained from the standard Cayley graph by adding $k$-cells
corresponding to cosets of finite $\langle T\rangle$ for $T\subset S$ of size $k$. For example for $W$ the triangle Coxeter group with exponents $\{3,3,3\}$, the complex $\Sigma$ is the tiling of the Euclidean plane by regular hexagons.

\begin{lem}
\label{lem:tiling}
Let $F$ be a Euclidean plane isometrically embedded in $\Phi$. Then the tiling of $F$ is either the standard square tiling, or the one of the Davis complex $\Sigma$ for $W$, where $W$ is the triangle Coxeter group with exponents $\{3,3,3\},\{2,4,4\}$ or $\{2,3,6\}$.
\end{lem}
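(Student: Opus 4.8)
The plan is to combine a curvature computation at the vertices of the tiling with a propagation argument forcing the tiling to be homogeneous. Throughout I use that, by definition, the tiling of $F$ is made of regular Euclidean $2m$-gons, one centred at each vertex of $F$ of type $\{s,t\}$ (with $m=m_{st}$), having the type-$\emptyset$ vertices of $F$ as corners and the type-$\{s\}$ vertices as edge-midpoints; flatness of $F$ makes the angle around every vertex equal $2\pi$. At a type-$\{s\}$ vertex each incident triangle contributes $\tfrac{\pi}{2}$, so exactly four triangles meet there, two from each side, and the tiling is edge-to-edge. The only vertices lying on more than one tile are therefore those of type $\emptyset$, where each $2m$-gon contributes its interior angle $\pi-\tfrac{\pi}{m}$.

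First I would classify the vertices. If tiles of exponents $m_1,\dots,m_k$ surround a type-$\emptyset$ vertex, flatness reads $\sum_{i=1}^k\bigl(\pi-\tfrac{\pi}{m_i}\bigr)=2\pi$, that is, $\sum_{i=1}^k \tfrac1{m_i}=k-2$. Since $m_i\geq 2$ gives interior angles in $[\tfrac{\pi}{2},\pi)$, one has $k\in\{3,4\}$, and a short enumeration yields the surrounding exponents $(2,2,2,2)$ when $k=4$ and $(3,3,3)$, $(2,4,4)$ or $(2,3,6)$ when $k=3$, with no other possibilities. In particular every tile is a square, hexagon, octagon or dodecagon, and these four vertex figures are exactly those of the square tiling and of the three claimed Davis complexes.

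The heart of the matter, and the step I expect to be the main obstacle, is to promote this local information to a global statement, ruling out tilings that mix the four figures; here connectedness of $F$ is essential. The key point is that the figures $(2,4,4)$ and $(2,3,6)$ are locally constant. Indeed, at a $(2,4,4)$ vertex every incident edge of the tiling borders an octagon, so each neighbouring vertex is incident to an octagon and hence, since no other figure contains an octagon, again has figure $(2,4,4)$; propagating across edges and using connectedness shows that a single $(2,4,4)$ vertex makes all of $F$ have figure $(2,4,4)$. The same holds for $(2,3,6)$: a neighbour of a $(2,3,6)$ vertex is incident either to a dodecagon or to both a hexagon and a square, and in either case is forced to have figure $(2,3,6)$.

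It then remains to read off the three cases. If some vertex has figure $(2,4,4)$ then $F$ is the uniform $4.8^2$ tiling, which I would identify with the Davis complex of the triangle Coxeter group $\{2,4,4\}$, its square being the $4$-gon of the parabolic of exponent $2$ and its octagons the $8$-gons of the two parabolics of exponent $4$; likewise a $(2,3,6)$ vertex makes $F$ the $4.6.12$ Davis complex of $\{2,3,6\}$. Otherwise every vertex has figure $(2,2,2,2)$ or $(3,3,3)$, so $F$ is tiled only by squares and hexagons; since no admissible figure now contains both a square and a hexagon (the sole candidate $(2,3,6)$ being excluded), no square and hexagon share an edge, and connectedness forces $F$ to consist entirely of squares, giving the standard square tiling, or entirely of hexagons, giving the Davis complex of $\{3,3,3\}$.
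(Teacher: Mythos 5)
Your proof is correct and follows essentially the same route as the paper's: an angle count at the type-$\emptyset$ vertices yields the admissible vertex figures, and a propagation argument along the connected $1$-skeleton shows the tiling is homogeneous. The paper organises the case split by the number of $2$-cells at a vertex rather than by explicit vertex figures, but the substance is identical.
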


\begin{proof}
The $2$-cells of the tiling are regular polygons with even numbers of sides, hence their angles lie in $[\frac{\pi}{2},\pi)$.
If there is a vertex $v$ of $F$ incident to four $2$-cells, then all these $2$-cells are squares. Consequently, any vertex of $F$ adjacent to $v$ is incident to at least two squares, and thus to exactly four squares. Then, since the $1$-skeleton of $F$ is connected, the tiling of $F$ is the standard square tiling.

If $v$ is incident to three $2$-cells, which are $2m,2m',2m''$-gons, then since $\frac{1}{m}+\frac{1}{m'}+\frac{1}{m''}=1$, we have $\{m,m',m''\}=\{3,3,3\},\{2,4,4\}$ or $\{2,3,6\}$. Moreover, a vertex $u$ of $F$ adjacent to $v$ is incident to two of these three $2$-cells, and this implies that the third $2$-cell incident to $u$ has the same size as the one incident to $v$. This determines uniquely the tiling of $F$ as the one of $\Sigma$.
\end{proof}

Henceforth, let $\Sigma$ be the Davis complex for $W$, where $W$ is the triangle Coxeter group with exponents $\{3,3,3\},\{2,4,4\}$ or $\{2,3,6\}$.

\begin{lem}
\label{lem:tiling_types}
Suppose $\Sigma$ is the tiling of a Euclidean plane isometrically embedded in $\Phi$.
Then the natural action of $W$ on $\Sigma$ preserves the edge types coming from $\Phi$.
\end{lem}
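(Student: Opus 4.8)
The plan is to reduce the statement to the invariance of the \emph{types of the $2$-cells} of $\Sigma$ under $W$, and then to verify this cell by cell, propagating outward from the walls of $W$.

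First I would record what the types are. Each $2$-cell of $\Sigma$ is a regular $2m$-gon centred at a type-$\{s,t\}$ vertex of $F$ with $m=m_{st}$; its corners are the type-$\emptyset$ vertices and the midpoints of its sides are the type-$\{s\}$ and type-$\{t\}$ vertices, occurring alternately. The three coarse classes of vertices --- centres, corners, midpoints --- are distinguished by the combinatorics of $\Sigma$ alone, so $W$ preserves them. The type of a midpoint is the intersection of the types of the two $2$-cells sharing it, and the type of any edge of $\Sigma$ coming from $\Phi$ is determined by the types of its endpoints. Hence it suffices to show that $W$ preserves the (unordered pair) type of each $2$-cell.

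Second, I would extract a local rule around a corner from the structure of $\Phi$. A corner $C$ is a type-$\emptyset$ vertex, i.e.\ a single element $g\in A_S$, and $g$ lies in a unique coset $gA_T$ for each type $T$; thus $C$ has at most one neighbour of each type in $\Phi$, in particular at most one type-$\{x\}$ neighbour for each generator $x$. By Lemma~\ref{lem:tiling} exactly three $2$-cells meet at $C$, and consecutive ones share a midpoint whose type is a single generator lying in both their types. This forces the three cell types at $C$ to be the three pairs $\{x,y\},\{y,z\},\{z,x\}$ of a single triple: a ``star'' $\{x,y\},\{x,z\},\{x,w\}$ would give three distinct type-$\{x\}$ midpoints adjacent to $C$, contradicting uniqueness. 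Running this around the boundary of a fixed $2$-cell $P$ shows that all corners of $P$ share one triple, so the neighbours of a type-$\{x,y\}$ cell are exactly the type-$\{x,z\}$ cells across its $\{x\}$-sides and the type-$\{y,z\}$ cells across its $\{y\}$-sides; in particular the type of a neighbour is determined by the type of $P$ together with the type of the shared midpoint.

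Third, since $W$ is generated by the reflections in its walls, it is enough to check that each such reflection $r$ preserves cell types. As $W$ acts freely on the corners (the $0$-cells of its Davis complex), no wall passes through a corner, so the wall of $r$ meets each $2$-cell it enters along the axis through the midpoints of two opposite sides and fixes that cell. Because opposite sides of a regular $2m$-gon carry the same midpoint type, $r$ preserves the alternating midpoint types of every cell it fixes; this is the base case. One then propagates outward by induction on the distance to the wall, using the local rule of the previous paragraph: if $r$ already preserves the type of a cell $P$ and the types of its sides, then it preserves the type of each neighbour of $P$, and hence the sides of those neighbours as well. Therefore $r$, and so all of $W$, preserves cell types, and with them all edge types coming from $\Phi$.

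The main obstacle is the second step --- ruling out the star configuration so that the propagation rule is well defined. This is exactly the point where the geometry of $\Phi$ is indispensable, entering through the uniqueness of the type-$\{x\}$ neighbour of a type-$\emptyset$ vertex; the remaining ingredients are the elementary symmetry of a regular polygon and a connectedness argument.
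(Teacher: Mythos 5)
Your argument is sound in outline and rests on exactly the same key input as the paper's proof: a type-$\emptyset$ vertex $g$ has at most one neighbour of each type in $\Phi$ (the coset $gA_T$), which forces the local configuration of types around a corner and then lets one propagate by connectivity. The paper runs this propagation directly at the level of vertices and edges: it fixes a vertex $v$ of $\Sigma$, observes that its three incident edges must carry three distinct singleton types, deduces the types of the two cells along an edge $e=vu$ and hence the types of the edges at $u$, and notes that this determination is compatible with the reflection swapping $u$ and $v$ --- so the labelling is both uniquely determined and $W$-equivariant in one stroke. Your version reduces to $2$-cell types, derives the ``no star'' local rule at corners, and then checks the generating reflections one at a time with a base case along the wall. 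This is longer but buys nothing extra; it is the same proof reorganised. (Two small points of care in your reduction: the claim that the midpoint type is the intersection of the two adjacent cell types needs the observation that those two cells correspond to \emph{distinct} cosets $gA_{st}$, $gA_{st'}$ with $t\neq t'$, which again follows from uniqueness of typed neighbours; and ruling out only the star configuration is not quite enough --- the clean statement is that each cell's type at a corner is the pair of types of its two sides there, and the three side types are distinct.)

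There is, however, one false assertion in your base case: ``opposite sides of a regular $2m$-gon carry the same midpoint type'' fails precisely when $m$ is odd, i.e.\ for the hexagons occurring in the $\{3,3,3\}$ and $\{2,3,6\}$ tilings, where sides $i$ and $i+m$ have opposite parities and hence different types. (Consistently with this, a wall of $W_{(3,3,3)}$ does cross edges of two different types, since all three generators are conjugate.) The conclusion you need is nevertheless true: the reflection of a regular $2m$-gon in the line through the midpoints of sides $1$ and $m+1$ sends side $k$ to side $2-k \pmod{2m}$, which has the same parity as $k$, so it preserves the alternating type pattern regardless of whether the two sides on the axis share a type. With the justification replaced by this parity remark, your proof goes through.
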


In particular, $W=W_T$ for some $T\subset S$ with $|T|=3$.

\begin{proof}
Chose a vertex $v$ of $\Sigma$, and let $\{s\},\{t\},\{r\}$ be the types of edges incident to~$v$. Let $u$ be a vertex of $\Sigma$ adjacent to $v$, say along an edge $e$ of type $\{r\}$. Hence the $2$-cells in $\Sigma$ incident to $e$ have types $\{s,r\}$ and $\{t,r\}$. Consequently, the types of the remaining two edges incident to $u$ are also $\{s\}$ and $\{t\}$, and in such a way that the reflection of $\Sigma$ interchanging the endpoints of $e$ preserves the types of these edges. This determines uniquely the types of the edges of $\Sigma$, and guarantees that they are preserved by $W$.
\end{proof}

\begin{defin}
A \emph{polarisation} of $\Sigma$ is a choice of a longest diagonal $l(\sigma)$ in each $2$-cell $\sigma$ of $\Sigma$. A polarisation is \emph{admissible} if
every vertex of $\Sigma$ belongs to exactly one $l(\sigma)$.
\end{defin}

\begin{defin}
\label{def:induced_polarisation}
Suppose $\Z^2\subset A_S$ acts properly and cocompactly on $\Sigma\subset \Phi$. For an edge $e$ of type $\{s\}$ in $\Sigma$, its vertices correspond to elements $g,gs^k\in A_S$ for $k>0$. We direct $e$ from $g$ to $gs^k$. By Lemma~\ref{lem:AS2}, the boundary of each $2$-cell $\sigma$ is subdivided into two directed paths joining two opposite vertices. The \emph{induced polarisation} of $\Sigma$ assigns to each $\sigma$ the longest diagonal $l(\sigma)$ joining these two vertices.
\end{defin}

\begin{lem}
\label{lem:adm}
An induced polarisation is admissible.
\end{lem}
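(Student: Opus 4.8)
The plan is to reformulate admissibility as a statement about \emph{sources} and \emph{sinks} of the directed boundaries, and then to close it with a short Euler-characteristic count on the quotient torus. First I observe that, by the definition of the induced polarisation, the two endpoints of the longest diagonal $l(\sigma)$ are precisely the two opposite vertices at which the directed boundary of $\sigma$ switches orientation: the unique vertex whose two incident boundary edges are both directed outward (the source) and the unique vertex whose two incident boundary edges are both directed inward (the sink). Since the vertices of $\Sigma$ are exactly the type-$\emptyset$ vertices, and $l(\sigma)$ meets the $0$-skeleton of $\Sigma$ only in its two endpoints, a vertex $v$ belongs to $l(\sigma)$ if and only if $v$ is the source or the sink of $\sigma$. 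Thus admissibility is equivalent to the assertion that every vertex of $\Sigma$ is the source or sink of \emph{exactly one} incident $2$-cell.

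Next I record that each $2$-cell has exactly one source and one sink. Reading $\partial\sigma$ as a word in the associated dihedral Artin group, Lemma~\ref{lem:AS2} (and Remark~\ref{rem:AS} for the square cells, where $m_{st}=2$) forces the cyclic sequence of signs of its $2m$ syllables to consist of a single block of positive exponents followed by a single block of negative exponents. Hence there are exactly two sign transitions around $\partial\sigma$, producing one source and one sink, and these are antipodal, which also confirms that $l(\sigma)$ is a longest diagonal.

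For a lower bound I argue locally. By Lemma~\ref{lem:tiling} and Lemma~\ref{lem:tiling_types} every vertex of $\Sigma$ has degree $3$, with one edge of each type $\{s\},\{t\},\{r\}$, and any two of these three edges bound a common corner, hence a common incident $2$-cell. Among the three directed edges at $v$ at least two point the same way by the pigeonhole principle; as they bound a common cell $\sigma$, the vertex $v$ is the source or sink of $\sigma$. Therefore every vertex is the source or sink of at least one incident cell.

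Finally I upgrade ``at least one'' to ``exactly one'' by counting. Since $\Z^2$ acts freely, properly and cocompactly on the plane $\Sigma$, the quotient $\bar\Sigma=\Sigma/\Z^2$ is a torus; writing $V,E,F$ for the numbers of its vertices, edges and $2$-cells, the degree-$3$ condition gives $3V=2E$, and $\chi(\bar\Sigma)=V-E+F=0$ then gives $V=2F$. Counting incidences $(v,\sigma)$ in $\bar\Sigma$ with $v$ a source or sink of $\sigma$: each cell contributes exactly $2$ by the second paragraph, for a total of $2F=V$, while each of the $V$ vertices contributes at least $1$ by the third paragraph. Hence each vertex contributes exactly $1$, which is admissibility. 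The main obstacle is the second step — extracting from the combinatorics of Lemma~\ref{lem:AS2} that each cell carries a single source and a single sink, and checking that the square cells are handled by Remark~\ref{rem:AS}; once this is in place, the local pigeonhole estimate and the Euler-characteristic count combine immediately.
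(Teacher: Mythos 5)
Your proof is correct, and the harder half is argued by a genuinely different route than the paper's. The easy half (each vertex lies on \emph{at least} one $l(\sigma)$) is identical to the paper's Step~2: among the three edges at a trivalent vertex two point the same way, and the cell containing that corner has its source or sink there. For the other half (\emph{at most} one), the paper argues locally-to-globally: if $v$ lay on two diagonals, all three edges at $v$ would point away from $v$, and an induction on distance in $\Sigma^1$ (using the gate property of cells in the Davis complex, via the citation to Ronan) shows every edge of $\Sigma$ then points away from $v$, making $v$ the \emph{unique} such vertex and contradicting the infinitude of its $\Z^2$-orbit. You replace all of this by a global count on the quotient: the action is free (properness plus torsion-freeness of $\Z^2$) and cellular, so $\bar\Sigma=\Sigma/\Z^2$ is a torus with $3V=2E$ and $V-E+F=0$, hence $V=2F$; since each cell carries exactly one source corner and one sink corner (which is exactly the content of Lemma~\ref{lem:AS2} and Remark~\ref{rem:AS} packaged into Definition~\ref{def:induced_polarisation}), the total of $2F=V$ source/sink corners forces the pointwise lower bound of one per vertex to be an equality. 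Your version is shorter and bypasses the convexity/gate machinery of $\Sigma$ entirely; the paper's version is purely local in its input (it only needs the orbit of $v$ to be infinite, not a genuine quotient surface) but leans on the combinatorial geometry of the Davis complex. Both use cocompactness in an essential way. The only points worth making explicit in your write-up are the freeness of the action (so that the quotient cell counts are legitimate) and the fact that trivalence of vertices is just the statement that $\Sigma$ is the Davis complex of a rank-three Coxeter group; neither is a gap.
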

\begin{proof}
\textbf{Step 1.} \emph{For each vertex $v$ of $\Sigma$, there is at most one $l(\sigma)$ containing $v$.}
\smallskip

Indeed, suppose that we have $v\in l(\sigma), l(\tau)$. Without loss of generality suppose that the edge $e=\sigma\cap \tau$ is directed from $v$. Then the other two edges incident to~$v$ are also directed from $v$. We will now prove by induction on the distance from~$v$ that each edge of $\Sigma$ is oriented from its vertex closer to $v$ to its vertex farther from~$v$ in the $1$-skeleton $\Sigma^1$ (they cannot be at equal distance since $\Sigma^1$ is bipartite).

For the induction step, suppose we have already proved the induction hypothesis for all edges closer to $v$ than an edge $uu'$, where $u'$ is closer to $v$ than $u$. Let $u''$ be the first vertex on a geodesic from $u'$ to $v$ in $\Sigma^1$. Let $\sigma$ be the $2$-cell containing the path $uu'u''$. By \cite[Thms~2.10 and~2.16]{R}, $\sigma$ has two opposite vertices $u_0$ closest to $v$ and $u_\mathrm{max}$ farthest from $v$. By the induction hypothesis, the edge $u'u''$ is oriented from $u''$ to $u'$, and both edges of $\sigma$ incident to $u_0$ are oriented from $u_0$. Thus if the edge $uu'$ was oriented to $u'$ we would have that $u'$ is opposite to $u_0$, so $u'=u_\mathrm{max}$, contradiction. This finishes the induction step.

As a consequence, $v$ is the unique vertex of $\Sigma$ with all edges incident to $v$ oriented from $v$. This contradicts the cocompactness of the action of $\Z^2$ on $\Sigma$ and proves Step~1.

\smallskip
\noindent\textbf{Step 2.} \emph{For each $v$ there is at least one $l(\sigma)$ containing $v$.}
\smallskip

Among the edges incident to $v$ there are at least two edges directed from $v$ or at least two edges directed to $v$. The $2$-cell $\sigma$ containing such two edges satisfies $l(\sigma)\ni v$.
\end{proof}

\section{Classification}
\label{sec:main}

\begin{prop}
\label{prop:333}
Let $\Sigma$ be the Davis complex for $W_T$ the triangle Coxeter group with exponents $\{3,3,3\}$ with an admissible polarisation $l$. Then there is an edge $e$ such that each hexagon $\gamma$ of $\Sigma$ satisfies
\begin{description}
\item[$\clubsuit$]
the diagonal $l(\gamma)$ has endpoints on edges of $\gamma$ parallel to $e$.
\end{description}
\end{prop}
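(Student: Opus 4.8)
The plan is to analyze the admissible polarisation $l$ on the hexagonal tiling $\Sigma$ by classifying the possible local patterns at a single vertex, and then to propagate a consistent ``direction'' across the whole plane using the admissibility condition. In the $\{3,3,3\}$ tiling every vertex $v$ has valence three, lies in exactly three hexagons, and the three edges at $v$ come in three parallelism classes; call their directions the three \emph{edge-directions} of $\Sigma$. For a hexagon $\gamma$ the longest diagonal $l(\gamma)$ joins two opposite vertices of $\gamma$, and its two endpoints determine a pair of opposite edges of $\gamma$ (those on which the endpoints lie). Each of the three pairs of opposite edges of $\gamma$ is parallel to exactly one of the three edge-directions of $\Sigma$, so condition $\clubsuit$ for a fixed class $e$ asserts that $l(\gamma)$ is the diagonal whose endpoints lie on the pair of edges parallel to $e$.

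First I would set up the local bookkeeping at a vertex. Fix $v$ and let $\gamma_1,\gamma_2,\gamma_3$ be the three hexagons around $v$. Admissibility says exactly one $l(\gamma_i)$ contains $v$; since a hexagon diagonal $l(\gamma_i)$ passes through $v$ iff $v$ is one of its two endpoints, and the endpoint-edges of that diagonal are the two edges of $\gamma_i$ meeting $v$, this pins down which edge-direction is ``realized'' at $v$. I would record, for each vertex, the unique edge-direction $d(v)$ such that the hexagon-diagonal through $v$ has its endpoints on edges parallel to $d(v)$. The core of the argument is then to show $d(v)$ is globally constant: this constant direction is the required $e$.

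The key step is a propagation/consistency argument across a shared edge. Let $u,v$ be adjacent along an edge $f$, shared by two hexagons $\gamma,\gamma'$; the third hexagon at $u$ is $\gamma''$, distinct from the third hexagon $\gamma'''$ at $v$. I would use admissibility at both $u$ and $v$ together with the fact that each of $\gamma,\gamma'$ already has a prescribed diagonal (determined by wherever the relevant endpoint falls) to rule out all local configurations except those in which $d(u)=d(v)$. Concretely, the diagonal $l(\gamma)$ has a single endpoint in the vertex-set $\{u,v\}$ at most, and admissibility forces a matching: the six vertices of $\gamma$ are paired off by the three diagonals of $\Sigma$ covering them, and tracking how these coverings overlap between $\gamma$ and its neighbor $\gamma'$ forces the chosen edge-direction to agree. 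Running this across every edge and invoking connectivity of $\Sigma^1$ yields that $d$ is constant.

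The main obstacle I expect is the case analysis in the propagation step: a priori a hexagon has three possible longest diagonals, and one must check that admissibility at the surrounding vertices genuinely excludes the ``twisted'' configurations in which neighboring hexagons select incompatible edge-directions. The cleanest way to control this is to think of the polarisation as a perfect matching on the vertices of $\Sigma$ induced by $\{l(\sigma)\}$ (admissibility is exactly the statement that each vertex is matched to precisely one partner), and to observe that in the hexagonal tiling the only perfect matchings arising from hexagon-diagonals are the three ``brick-wall'' matchings, each associated to one edge-direction. Establishing that these are the only admissible patterns — essentially a rigidity statement for matchings by long diagonals on $\Sigma$ — is the heart of the proof, after which the existence of the required $e$ is immediate.
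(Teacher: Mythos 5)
There is a genuine gap, and it appears already in the definition of $d(v)$. A longest diagonal of a hexagon joins two opposite vertices, and each of these two vertices is incident to two edges of the hexagon; the four edges so obtained form \emph{two} of the three parallel classes, not one. Hence for the unique diagonal $l(\gamma_i)$ through $v$ there are exactly two edge-directions $d$ for which ``$l(\gamma_i)$ has endpoints on edges of $\gamma_i$ parallel to $d$'' (namely every direction except the one to which $l(\gamma_i)$ is itself parallel), so your $d(v)$ is not well-defined. If you repair this by taking $d(v)$ to be the direction of the diagonal through $v$, the constancy you aim for is simply false, and so is the concluding rigidity claim that the only admissible polarisations are the three ``brick-wall'' matchings. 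Concretely: fix the parallel class of an edge $e$, partition the hexagons into the strips that are orbits of the translation $\rho$ perpendicular to $e$, and in each strip independently assign to all of its hexagons one of the two longest diagonals whose endpoints meet the $e$-parallel edges. A check at each vertex (it lies in two hexagons of one strip, where it is an endpoint of their common $e$-parallel edge, and in one hexagon of an adjacent strip, where the diagonal through it would be parallel to $e$ and is therefore never chosen) shows that every vertex is covered exactly once, so all of these uncountably many polarisations are admissible; most of them use diagonals of two different directions and are invariant under no translation transverse to the strips. What is actually true --- and is exactly what the proposition asserts --- is only that \emph{some} one of the three directions is avoided by every $l(\gamma)$.

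Because of this, your propagation step cannot ``force the chosen edge-direction to agree'' across every edge: adjacent strips genuinely make independent choices, so a naive induction over all of $\Sigma^1$ breaks down at the edges separating two strips. The paper's proof circumvents this by propagating only along a single $\rho$-orbit: if two adjacent hexagons $\sigma,\tau$ have non-parallel diagonals, admissibility at the vertex of $\sigma\cap\tau$ missed by both forces the diagonal of the third hexagon at that vertex, and iterating pins down $\clubsuit$ (not the full polarisation) along that one strip; Lemma~\ref{lem:strips} then transfers the property $\clubsuit$ --- which, unlike the diagonal itself, is insensitive to the binary choice made in each strip --- to all remaining hexagons, with a separate trivial case when all diagonals are parallel. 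Your matching picture is fine as bookkeeping, but the argument must target this weaker, direction-eliminating statement rather than a classification of the admissible matchings, which is strictly larger than you assume.
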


Note that if the conclusion of Proposition~\ref{prop:333} holds, then the translation $\rho$ mapping one hexagon containing $e$ to the other preserves $l$.

\begin{rem} It is easy to prove the converse, i.e.\ that if each $l(\gamma)$ has endpoints on edges of $\gamma$ parallel to $e$, and if $l$ is $\rho$-invariant, then $l$ is admissible. This can be used to classify all admissible polarisations, but we will not need it.
\end{rem}

To prove Proposition~\ref{prop:333} we need the following reduction.

\begin{lem}
\label{lem:strips} Let $e$ be an edge and $\rho$ a translation mapping one hexagon containing~$e$ to the other. If $\clubsuit$ holds for all hexagons~$\gamma$ in one $\rho$-orbit, then it holds for all~$\gamma$.
\end{lem}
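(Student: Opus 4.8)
The plan is to fix coordinates so that the chosen edge $e$ is horizontal. Then the hexagons of a single $\rho$-orbit form a vertical column, any two consecutive hexagons sharing a horizontal edge (the top edge of one being the bottom edge of the next). Since each $l(\gamma)$ is by definition a \emph{longest} diagonal, it is one of the three long diagonals joining opposite vertices of $\gamma$; exactly one of these, the one joining the leftmost to the rightmost vertex, is horizontal, and the other two join the top edge to the bottom edge. Thus, for this $e$, condition $\clubsuit$ asserts precisely that $l(\gamma)$ is \emph{not} the horizontal long diagonal. I will call $l(\gamma)$ \emph{bad} when it is this horizontal diagonal, so that $\clubsuit$ for $\gamma$ means that $l(\gamma)$ is not bad.

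With this reformulation the lemma reduces to a propagation statement: if $\clubsuit$ holds for every hexagon of a column $C$, then it holds for every hexagon of each of the two columns adjacent to $C$. Granting this, an induction on the number of columns separating a given hexagon from the column on which $\clubsuit$ is assumed, together with the reflection symmetry exchanging the two sides of $C$, yields $\clubsuit$ for all of $\Sigma$.

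For the propagation step I would fix the column $C'$ immediately to the right of $C$ and a hexagon $\delta$ of $C'$, and argue by contradiction that $l(\delta)$ cannot be bad. If $l(\delta)$ were horizontal, it would contain the leftmost vertex $p$ of $\delta$. This vertex $p$ is also an endpoint of the horizontal edge $f$ shared by two consecutive hexagons $\gamma,\gamma^-$ of $C$; let $q$ be the other endpoint of $f$, so that $p$ and $q$ are both vertices of $\gamma$ and of $\gamma^-$. In $\gamma$ the two top-to-bottom long diagonals are the one through $p$ and the one through $q$, and likewise in $\gamma^-$; since $\clubsuit$ holds on $C$, each of $l(\gamma)$ and $l(\gamma^-)$ is one of these, hence contains exactly one of $p,q$. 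Now admissibility at $p$ says that $p$ lies on a unique chosen diagonal, which is $l(\delta)$; therefore neither $l(\gamma)$ nor $l(\gamma^-)$ contains $p$, forcing both of them to contain $q$. This contradicts admissibility at $q$, which allows only one chosen diagonal through $q$. Hence $l(\delta)$ is not bad and $\clubsuit$ holds for $\delta$.

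I expect the main obstacle to be the bookkeeping of the vertex-sharing combinatorics: one must verify carefully, for each of the hexagons meeting at $p$ and at $q$, exactly which long diagonal passes through that vertex, and check that $p$ is indeed the leftmost vertex of $\delta$ as well as an endpoint of the edge $f$ shared by the relevant pair of $C$-hexagons. Once this local incidence picture is pinned down, the two instances of admissibility at $p$ and at $q$ combine to exclude the only offending configuration, and the induction over columns completes the proof.
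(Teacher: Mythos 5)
Your proposal is correct and is essentially the paper's own argument: the paper also propagates $\clubsuit$ from a $\rho$-orbit (your column $C$) to an adjacent hexagon $\tau$ by considering the vertex $v=\sigma\cap\rho(\sigma)\cap\tau$, observing that $\clubsuit$ for the two column hexagons together with admissibility forces $v$ to lie on one of $l(\sigma),l(\rho(\sigma))$, hence $v\notin l(\tau)$, and then concludes by induction using connectivity. Your version merely phrases this as a contradiction at the two endpoints $p,q$ of the shared edge rather than directly, which is the same local incidence argument.
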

\begin{proof}
Suppose that $\clubsuit$ holds for all hexagons $\gamma$ in the $\rho$-orbit of a hexagon $\sigma$. Let $\tau$ be a hexagon adjacent to two of them, say to $\sigma$ and $\rho(\sigma)$. Let $v=\sigma\cap\rho(\sigma)\cap\tau$. Since $\clubsuit$ holds for $\gamma=\sigma$ and $\gamma=\rho(\sigma)$, by the admissibility of $l$, $v$ belongs to one of $l(\sigma), l(\rho(\sigma))$. Thus $v\notin l(\tau)$ and hence $\clubsuit$ holds for $\gamma=\tau$. Proceeding inductively, by the connectivity of $\Sigma$, we obtain $\clubsuit$ for all $\gamma$.
\end{proof}

\begin{proof}[Proof of Proposition~\ref{prop:333}]
\noindent\textbf{Case 1.} There are adjacent hexagons $\sigma,\tau$ with non-parallel $l(\sigma),l(\tau)$.
\smallskip

Let $f=\sigma\cap \tau$. Without loss of generality $l(\sigma)\cap f=\emptyset, l(\tau)\cap f \neq\emptyset$. Let $v$ be the vertex of $f$ outside $l(\tau)$. By the admissibility of $l$, $v$ is contained in $l(\sigma')$ for the third hexagon $\sigma'$ incident to $v$. Hence $\clubsuit$ holds for $e=\sigma\cap\sigma'$ and $\gamma=\sigma, \sigma', \tau$
(see Figure~\ref{Figure_lox_1}). Let $\rho$ be the translation mapping $\sigma$ to~$\sigma'$.
Replacing the pair $\sigma, \tau$ with $\tau, \sigma'$ and repeating inductively the argument shows that
$\clubsuit$ holds for $\gamma=\rho^n(\sigma), \rho^n(\tau)$ for all $n>0$ (note that $e$ gets replaced by parallel edges in this procedure).

\begin{figure}[H]
\begin{center}
 \scalebox{1}{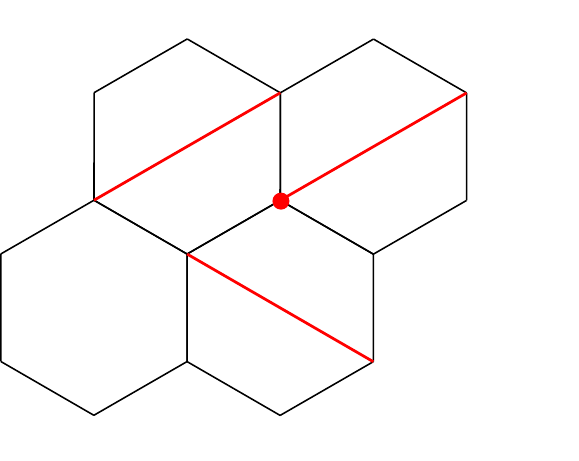}
\caption{}
\label{Figure_lox_1}
\end{center}
\end{figure}

Furthermore, by the admissibility of $l$, since $l(\rho^{-1}(\tau))$ is disjoint from $l(\sigma)$ and~$l(\tau)$, it leaves us only one choice for $l(\rho^{-1}(\tau))$, and it satisfies $\clubsuit$ for $\gamma=\rho^{-1}(\tau)$. Replacing the pair $\sigma, \tau$ with $\rho^{-1}(\tau),\sigma$ and repeating inductively the argument shows that $\clubsuit$ holds for $\gamma=\rho^{-n}(\sigma), \rho^{-n}(\tau)$ for all $n>0$. It remains to apply Lemma~\ref{lem:strips}.

\smallskip
\noindent\textbf{Case 2.} All the $l(\sigma)$ are parallel.
\smallskip

In this case it suffices to take any edge $e$ intersecting some $l(\sigma)$.
\end{proof}

\begin{prop}
\label{prop:244}
Let $\Sigma$ be the Davis complex for $W_T$ the triangle Coxeter group with exponents $\{2,4,4\}$ with an admissible polarisation $l$. Then
there is an edge $e$
such that each octagon $\gamma$ of $\Sigma$ satisfies
\begin{description}
\item[$\diamondsuit$]
the diagonal $l(\gamma)$ has endpoints on edges of $\gamma$ parallel to $e$.
\end{description}
\end{prop}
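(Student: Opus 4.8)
The plan is to adapt the proof of Proposition~\ref{prop:333}, with the octagons of $\Sigma$ playing the role of the hexagons and with the squares of $\Sigma$ entering as an extra feature that has to be tracked. First I set up the possible directions. In a regular octagon the longest diagonal joins a pair of opposite vertices, so $l(\gamma)$ points in one of four directions, namely the four directions bisecting the angles between consecutive edge directions of $\Sigma$. A direct inspection shows that the two edges of $\gamma$ parallel to a fixed edge $e$ carry between them four of the eight vertices of $\gamma$, and that exactly two of the four long diagonals have both endpoints among these; these two directions are adjacent in the cyclic order of diagonal directions. Thus for a fixed $e$ the condition $\diamondsuit$ selects precisely an adjacent pair of diagonal directions, the four edge directions corresponding to the four adjacent pairs. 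Proving the proposition is therefore equivalent to showing that all octagon diagonals point in directions lying in a single adjacent pair, equivalently that no two octagons carry perpendicular diagonals.

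The heart of the argument is a local analysis at an arbitrary square $\tau$ of $\Sigma$. Since $\tau$ is a square, $l(\tau)$ is one of its two diagonals, which are horizontal and vertical. The four corners of $\tau$ are exactly the vertices at which $\tau$ meets two octagons each; imposing admissibility at all four corners --- exactly one chosen diagonal through each corner --- gives a system of four conditions relating the diagonal directions of the four octagons around $\tau$ to the diagonal of $\tau$. Solving this small system is the main computation; its key consequences are that two octagons adjacent across an edge of $\Sigma$ can never carry perpendicular diagonals, and that once the diagonal direction of one of the four octagons is prescribed, both $l(\tau)$ and the admissible directions of the remaining octagons are tightly constrained.

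With this local rigidity in hand I mirror the dichotomy of Proposition~\ref{prop:333}. If all octagon diagonals are parallel, I read off a compatible edge $e$ directly from the correspondence of the first paragraph. Otherwise there are two adjacent octagons $\gamma,\gamma'$ with distinct diagonals; by the local rigidity their directions are not perpendicular, hence form an adjacent pair, which singles out a candidate edge direction $e$. Using the square conditions I then propagate $\diamondsuit$ for this fixed $e$ from $\gamma,\gamma'$ across the neighbouring squares to the adjacent octagons, and finally to all octagons of $\Sigma$ by an analogue of Lemma~\ref{lem:strips}.

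The main obstacle is the square bookkeeping: the interaction of the four possible octagon diagonal directions with the two diagonals of a square across its four corners, both in establishing the local rigidity and in routing the propagation through squares rather than directly between octagons. A further point requiring care is that for two of the four adjacent pairs the compatible edge $e$ is an octagon--square edge rather than an octagon--octagon edge; for such $e$ the translation ``across $e$'' used in Lemma~\ref{lem:strips} must be replaced by a lattice translation of $\Sigma$ preserving the relevant line of octagons.
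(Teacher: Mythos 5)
Your opening reduction is correct: the four long-diagonal directions of the octagons form a cyclic sequence in which each edge direction of $\Sigma$ selects one adjacent pair, so the proposition is indeed equivalent to the assertion that no two octagons carry perpendicular diagonals. The gap is in the step you call the heart of the argument: the admissibility system at a single square $\tau$ is strictly weaker than you claim. Label the corners of $\tau$ by $N,E,S,W$ so that its diagonals are $NS$ and $EW$, and let $\gamma_{NE},\gamma_{SE},\gamma_{SW},\gamma_{NW}$ be the four surrounding octagons. Take $l(\tau)=NS$, let $l(\gamma_{NE})$ and $l(\gamma_{NW})$ be the long diagonals through $E$ and through $W$ (these differ by $45^\circ$, so this is allowed), and let $l(\gamma_{SE})$ and $l(\gamma_{SW})$ be the long diagonals perpendicular to $l(\gamma_{NW})$ and to $l(\gamma_{NE})$ respectively; one checks that these last two miss all corners of $\tau$, so every corner of $\tau$ is covered exactly once. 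Yet all four diagonal directions occur among the four octagons around $\tau$, including two perpendicular pairs (on the diagonally opposite octagons, which share no edge). So the four corner conditions at one square neither force the surrounding octagons into a single adjacent pair nor detect all perpendicular pairs; ruling out such configurations requires cells at distance two from $\tau$, which is precisely what the paper's chains of forced values (through the second square $\blacksquare$, the octagon $\sigma'$ and the vertices $x,z$ of Figure~\ref{Figure_lox_3}) are doing. Even your weaker claim, that two octagons sharing an edge are never perpendicular, needs the squares at \emph{both} endpoints of the shared edge, not one square.

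Moreover, that pairwise statement does not globalize by itself: along a chain of edge-adjacent octagons the diagonal direction may a priori rotate by one step at a time around the four-element cycle of directions, producing a perpendicular pair after two steps without any edge-adjacent perpendicular pair ever occurring. So the final propagation cannot be run purely on non-perpendicularity of adjacent octagons; it must establish, as the paper does via its two-case analysis, that $\diamondsuit$ for a fixed $e$ first holds along an entire line of octagons (a $\rho$-orbit), after which Lemma~\ref{lem:strips2} spreads it to the whole plane. As written, the proposal asserts this propagation rather than proving it, and the local input it relies on is insufficient to carry it out.
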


An edge $e$ of $\Sigma$ lies either in two octagons $\sigma, \sigma'$ or there is a square with two parallel edges $e,e'$ in octagons $\sigma, \sigma'$. The translation of $\Sigma$ mapping $\sigma$ to $\sigma'$ is called an \emph{$e$-translation}.
Note that if the conclusion of Proposition~\ref{prop:244} holds, then an $e$-translation preserves $l$.
\begin{lem}
\label{lem:strips2}
Let $e$ be an edge and $\rho$ an $e$-translation. If $\diamondsuit$ holds for all octagons~$\gamma$ in one $\rho$-orbit, then it holds for all $\gamma$.
\end{lem}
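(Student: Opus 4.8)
The plan is to mimic the proof of Lemma~\ref{lem:strips}, propagating $\diamondsuit$ from one $\rho$-orbit of octagons to all octagons by the connectivity of $\Sigma$, using throughout admissibility in the form that each vertex of $\Sigma$ lies on exactly one longest diagonal. The feature absent in the hexagonal case is the square $2$-cells: exactly three cells meet at each vertex of $\Sigma$, namely two octagons and one square, and each square contributes its own longest diagonal (one of its two equal diagonals). So the propagation must pass through squares, and this bookkeeping is where I expect the difficulty to lie. I first record that, by Lemma~\ref{lem:tiling}, $\Sigma$ is the truncated square tiling, and that $\diamondsuit$ pins the diagonal of an octagon $\gamma$ to one of the two longest diagonals having exactly one endpoint on each of the two edges of $\gamma$ parallel to $e$. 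There are two kinds of $e$-translations, which I treat separately.

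For a \emph{square-type} $e$-translation, $e$ and $e'$ are opposite edges of a common square $\pi$, lying in the two orbit octagons $\sigma$ and $\sigma'=\rho(\sigma)$; the remaining two octagons $\tau,\tau'$ incident to $\pi$ are off-orbit, and each of them shares exactly one vertex of $\pi$ with $\sigma$ and one with $\sigma'$. I would argue entirely at $\pi$, in the spirit of the hexagon argument. By $\diamondsuit$, $l(\sigma)$ covers exactly one of the two vertices of $\pi$ on $e$, and $l(\sigma')$ exactly one of the two on $e'$. A short case check then shows that if $l(\tau)$ covered either of the two vertices of $\pi$ it touches, then admissibility would force $l(\pi)$ together with $l(\sigma)$, or $l(\pi)$ together with $l(\sigma')$, to cover a common vertex of $\pi$ — a forbidden double cover. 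Hence $l(\tau)$ covers neither such vertex, which is precisely $\diamondsuit$ for $\tau$, and likewise for $\tau'$.

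For an \emph{edge-type} $e$-translation, the orbit is a row of octagons $\sigma_n$ in which $\sigma_n$ and $\sigma_{n+1}$ share an edge $e_n$ parallel to $e$, the squares lying to either side of the row. Here the single-square argument is not enough, and the propagation is genuinely two-step. First, for each $e_n$ the diagonals $l(\sigma_n)$ and $l(\sigma_{n+1})$ each have, by $\diamondsuit$, exactly one endpoint among the two vertices of $e_n$; by admissibility these endpoints are distinct, so both vertices of $e_n$ are covered by the orbit octagons. Consequently neither square flanking $e_n$ can cover the vertex of $e_n$ it contains, which forces its longest diagonal to join its other two vertices, the ones lying off the row. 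Second, these off-row vertices are exactly the ones shared with the octagons of the neighbouring rows, so a final application of admissibility forces the diagonals of those octagons to avoid them, which is $\diamondsuit$.

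In both cases, once $\diamondsuit$ holds on the orbits neighbouring the original one, I would iterate: each neighbouring row or diagonal is itself a $\rho$-orbit, so the same argument applies, and by the connectivity of $\Sigma$ we obtain $\diamondsuit$ for all octagons, exactly as in Lemma~\ref{lem:strips}. The main obstacle is precisely the presence of the squares, and most acutely the edge-type case, where the one-step vertex argument of the hexagonal tiling is replaced by the two-step passage: first pin the flanking squares' diagonals, then the neighbouring octagons.
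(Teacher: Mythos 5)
Your proposal is correct and follows essentially the same route as the paper: in the square-type case you rule out $l(\tau)$ meeting the two vertices of the square it touches by the same double-cover contradiction, and in the edge-type case you force the flanking squares' diagonals onto the two vertices of $\sigma\cap\tau$ exactly as in the paper (which states this more tersely as a single appeal to admissibility), before iterating over neighbouring $\rho$-orbits. The only cosmetic slip is calling the two forced endpoints of a flanking square's diagonal "off-row" vertices — they are still vertices of the row octagons — but the argument they support is the intended one.
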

\begin{proof}
Suppose that $\diamondsuit$ holds for all octagons $\gamma$ in the $\rho$-orbit of an octagon $\sigma$. We can assume $e\subset \sigma$. Suppose first that $e$ lies in another octagon $\sigma'$. Then let $\tau$ be an octagon outside the $\rho$-orbit of $\sigma$ adjacent to some $\rho^k(\sigma)$, say $\sigma$. Let $\square, \rho(\square)$ be the two squares adjacent to both $\sigma$ and $\tau$ (see Figure \ref{Figure_lox_2}). By the admissibility of~$l$, we have that $l(\square),l(\rho(\square))$ contain the two vertices of $\sigma\cap \tau$. Consequently, $l(\tau)$ intersects the edge $\tau\cap\rho(\tau)$, and so $\gamma=\tau$ satisfies $\diamondsuit$. It is easy to extend this to all the octagons $\gamma$.

\begin{figure}[H]
\begin{center}
 \scalebox{1}{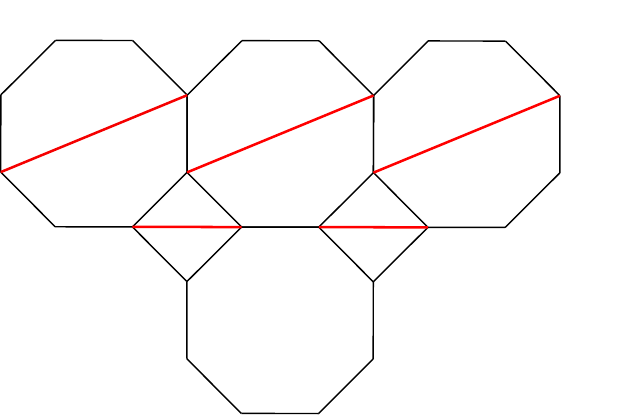}
\caption{}
\label{Figure_lox_2}
\end{center}
\end{figure}

It remains to consider the case where there is a square with two parallel edges $e,e'$ in octagons $\sigma, \sigma'$. Let $\tau$ be an octagon adjacent to two of them, say to $\sigma$ and~$\rho(\sigma)$. Let $v=e\cap \tau, x=e'\cap \tau$. Since $\diamondsuit$ holds for $\gamma=\sigma,\sigma'$, each of $v,x$ lies in one of $l(\sigma), l(\square), l(\sigma')$. Thus by the admissibility of $l$, we have $v,x\notin l(\tau)$. Any of the two remaining choices for $l(\tau)$ satisfy $\diamondsuit$ for $\gamma=\tau$. It is again easy to extend this to all the octagons $\gamma$.
\end{proof}

\begin{proof}[Proof of Proposition~\ref{prop:244}]
Note that we fall in one of the following two cases.

\smallskip
\noindent\textbf{Case 1.} There is an edge $f$ in octagons $\sigma,\tau$ with $l(\sigma)\cap f= \emptyset, l(\tau)\cap f \neq\emptyset$.
\smallskip

Let $v$ be the vertex of $f$ distinct from $u=l(\tau)\cap f$. By the admissibility of $l$, the vertex $v$ is contained in $l(\square)$ for the square $\square$ incident to $v$. Let $x$ be the vertex in $\tau\cap\square$ distinct from $v$, and let $\sigma'$ be the octagon incident to $x$ distinct from $\tau$. By the admissibility of $l$, the vertex $x$ is contained in $l(\sigma')$. Hence $\diamondsuit$ holds for $e=\sigma\cap\square$ and $\gamma=\sigma', \tau$
(see Figure \ref{Figure_lox_3}). Let $\rho$ be the translation mapping $\sigma$ to $\sigma'$.

\begin{figure}[H]
\begin{center}
 \scalebox{1}{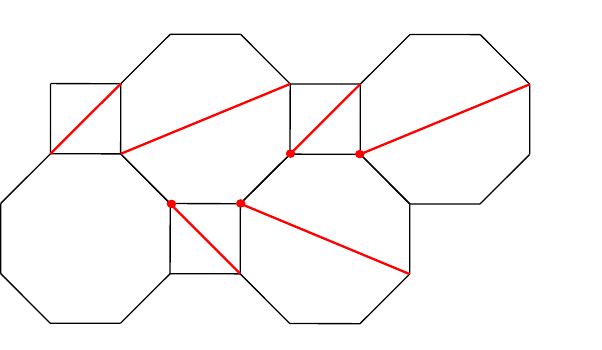}
\caption{}
\label{Figure_lox_3}
\end{center}
\end{figure}

Now let $\blacksquare$ be the square incident to $u$, and let $z$ be the vertex in $\sigma\cap\blacksquare$ distinct from $u$. Note that by the admissibility of $l$, we have $z\in l(\blacksquare)$, and consequently $\rho^{-1}(x)\in l(\sigma)$ and $\rho^{-1}(v)\in l(\rho^{-1}(\square))$. Hence $l(\rho^{-1}(\tau))$ cannot contain neither $z$, nor $\rho^{-1}(v)$, nor $\rho^{-1}(x)\in l(\sigma)$. There is only one remaining choice for $l(\rho^{-1}(\tau))$, and it satisfies $\diamondsuit$ for $\gamma=\rho^{-1}(\tau)$.

We can now argue exactly as in the proof of Proposition~\ref{prop:333} that that $\diamondsuit$ holds for $\gamma=\rho^{n}(\sigma), \rho^{n}(\tau)$ for all $n\in \Z$. It then remains to apply Lemma~\ref{lem:strips2}.

\smallskip
\noindent\textbf{Case 2.} For each edge $e$ in octagons $\sigma,\sigma'$ with $l(\sigma)\cap e\neq \emptyset$ we have $l(\sigma')\cap e \neq\emptyset$.

\smallskip

Let $\sigma$ be any octagon and $e$ an edge contained in another octagon $\sigma'$ and intersecting $l(\sigma)$.
Let $\rho$ be the translation mapping $\sigma$ to $\sigma'$. One can show inductively that $\diamondsuit$ holds for octagons $\gamma=\rho^n(\sigma)$ for all $n\in \Z$. It then remains to apply Lemma~\ref{lem:strips2}.
\end{proof}

Note that for $l$ satisfying $\diamondsuit$ for all octagons $\gamma$, the values of $l$ on octagons determine its values on squares.

\begin{prop}
\label{prop:236}
Let $\Sigma$ be the Davis complex for $W_T$ the triangle Coxeter group with exponents $\{2,3,6\}$ with an admissible polarisation $l$. Then
there is an edge $e$ such that
such that each $12$-gon $\gamma$ of $\Sigma$ satisfies
\begin{description}
\item[$\heartsuit$]
the diagonal $l(\gamma)$ has endpoints on edges of $\gamma$ parallel to $e$.
\end{description}
\end{prop}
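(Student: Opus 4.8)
The plan is to follow the template of Propositions~\ref{prop:333} and~\ref{prop:244}. First I set up the combinatorics of the tiling $\Sigma$, which is the $4.6.12$ tiling: each edge of a $12$-gon is shared with either a square or a hexagon, and consecutive edges around a $12$-gon alternate between these two types. The $12$-gons sit at the vertices of a triangular lattice, the squares at its edge midpoints and the hexagons at its triangle centres, so two nearest $12$-gons are separated by a square, the two shared edges being a pair of opposite edges of that square. For an edge $e$ of a $12$-gon $\sigma$ lying in a square $\square$ I then define the \emph{$e$-translation} $\rho$ to be the translation carrying $\sigma$ to the $12$-gon $\sigma'$ containing the opposite edge $e'$ of $\square$, exactly as in Proposition~\ref{prop:244}. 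I will also record the elementary geometric fact that the longest diagonal of a regular $12$-gon connects opposite vertices and is flanked, at its two endpoints, by two pairs of opposite edges, one of square-type and one of hexagon-type; this is what allows a single edge $e$ to witness $\heartsuit$ for all $12$-gons at once.

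Next I prove a reduction lemma analogous to Lemmas~\ref{lem:strips} and~\ref{lem:strips2}: if $\heartsuit$ holds for all $12$-gons in one $\rho$-orbit, then it holds for all of them. The new feature is that a $12$-gon $\tau$ adjacent to two consecutive $12$-gons $\sigma,\rho(\sigma)$ of the orbit shares, together with them, a common central hexagon $h$ (at the corresponding triangle of the lattice of $12$-gon centres) in addition to the connecting squares. Assuming $\heartsuit$ for $\sigma$ and $\rho(\sigma)$, admissibility at the vertices of $h$ and of the connecting squares forces the diagonals $l(h)$ and $l(\square)$, which in turn forces the two vertices of $\tau$ lying on $h$ to be covered by $h$ or by a square; hence $l(\tau)$ avoids them and $\heartsuit$ holds for $\tau$. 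Propagating by the connectivity of $\Sigma$ finishes the lemma.

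The proof of the Proposition then splits into the same dichotomy as Proposition~\ref{prop:244}. In the aligned case, where the diagonals of any two nearest $12$-gons are parallel, every $l(\gamma)$ points in a single direction and I take $e$ to be any edge realising $\heartsuit$. In the misaligned case there are nearest $12$-gons $\sigma,\sigma'$ sharing a square $\square$ with non-parallel diagonals; examining admissibility at the vertices of $\square$ and of the surrounding squares and hexagons pins down the diagonals of $\sigma,\sigma'$ and of the intermediate cells, identifies the edge $e=\sigma\cap\square$, and shows $\heartsuit$ for $\sigma,\sigma'$. I then propagate $\heartsuit$ along the strip in both directions using the $e$-translation $\rho$, verbatim as in Proposition~\ref{prop:244}, and conclude with the reduction lemma.

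I expect the main obstacle to be the bookkeeping caused by having three sizes of cell rather than two: around each $12$-gon the ten non-diagonal vertices must be accounted for by the adjacent squares \emph{and} hexagons, and the intermediate hexagons carry their own longest diagonals that are themselves constrained by admissibility. Verifying in the strip-propagation and in the reduction lemma that this forced data is consistent---in particular the interaction at the central hexagon $h$---is the crux; the remainder is a direct adaptation of the $\{2,4,4\}$ argument.
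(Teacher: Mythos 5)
Your overall template (a reduction lemma, an aligned/misaligned dichotomy, and strip propagation) is indeed the paper's, but the misaligned case as you set it up cannot work, and the root cause is that you only allow $e$-translations through a single square. Suppose $\sigma,\sigma'$ are $12$-gons sharing a square $\square$, with $\sigma\cap\square=ab$ and $\sigma'\cap\square=cd$, where $c$ lies below $a$ and $d$ below $b$. If both $l(\sigma)$ and $l(\sigma')$ were to satisfy $\heartsuit$ for $e=\sigma\cap\square$, then $l(\sigma)$ would have to end at $a$ or $b$ and $l(\sigma')$ at $c$ or $d$; the non-parallel combinations are exactly $\{a,c\}$ and $\{b,d\}$, i.e.\ two \emph{adjacent} vertices of $\square$, and then neither diagonal $ad$ nor $bc$ is an admissible value for $l(\square)$. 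So two square-adjacent $12$-gons with non-parallel longest diagonals can never both satisfy $\heartsuit$ for the square-type direction. In the genuine misaligned case the two diagonals differ by $30^\circ$ (the $60^\circ$ configuration has to be ruled out by a separate contradiction, which your sketch does not mention), and their unique common flanking direction is the hexagon-type one. Consequently the witnessing edge $e$ is a hexagon-type edge, and the translation along which $\heartsuit$ must be propagated joins next-nearest $12$-gons through a hexagon--square--hexagon chain. This second configuration of $e$-translations, which the paper defines alongside the square one, is absent from your setup, so neither your identification of $e=\sigma\cap\square$ nor your reduction lemma covers the misaligned case.

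A smaller but real gap sits in your reduction lemma even for the square configuration: covering only the two vertices of $\tau\cap h$ eliminates only two of the six main diagonals of $\tau$, leaving four candidates of which only two satisfy $\heartsuit$. You must also force the two outer vertices of the two connecting squares; the four excluded vertices then form a path of three consecutive edges of $\tau$, and only then do the two surviving diagonals both have flanking edges parallel to $e$. This requires first pinning down the diagonals of those squares and of further adjacent hexagons, not just of $h$. The aligned case of your argument is fine.
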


Let $e$ be an edge. An \emph{$e$-translation} is the translation of $\Sigma$ mapping $\sigma$ to $\sigma'$ in of the two following configurations. In the first configuration we have a square with two parallel edges $e,e'$ in 12-gons $\sigma, \sigma'$.
In the second configuration we have four parallel edges $e,e',e'',e'''$ such that $e', e''$ lie in a square, $e,e'$ in a hexagon~$\phi$ and $e'',e'''$ in another hexagon, and we consider 12-gons $\sigma\supset e, \sigma'\supset e'''$. Again, if the conclusion of Proposition~\ref{prop:236} holds, then an $e$-translation preserves $l$. To see this in the configuration with hexagons it suffices to observe that $l(\phi)$ (and similarly for the other hexagon) is not parallel to~$e$: otherwise $l(\phi)$ would intersect $l(\square)$ for $\square$ the square containing $e\cap l(\sigma)$.

\begin{lem}
\label{lem:strips3}
Let $e$ be an edge and $\rho$ an $e$-translation. If $\heartsuit$ holds for all $12$-gons~$\gamma$ in one $\rho$-orbit, then it holds for all $\gamma$.
\end{lem}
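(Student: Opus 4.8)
The plan is to follow the pattern of the proofs of Lemmas~\ref{lem:strips} and~\ref{lem:strips2}: assuming $\heartsuit$ for all $12$-gons in the orbit $\{\rho^n\sigma\}$, I would propagate $\heartsuit$ to the $12$-gons flanking the corresponding strip, and then invoke the connectivity of $\Sigma$. Throughout I use the local structure of $\Sigma$ recorded in the proofs of Lemmas~\ref{lem:tiling} and~\ref{lem:tiling_types}: every vertex lies in exactly one square, one hexagon and one $12$-gon; the $s$-edges separate $12$-gons from squares, the $t$-edges separate $12$-gons from hexagons, and the $r$-edges separate squares from hexagons. In particular each $12$-gon is bordered by six squares and six hexagons, alternating around its boundary.

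First I would pin down $l$ on the cells lying between consecutive orbit $12$-gons. In the first configuration these are squares $\square_n$, with $\square_n\cap\rho^n\sigma$ and $\square_n\cap\rho^{n+1}\sigma$ a pair of parallel $s$-edges, both parallel to $e$. Property $\heartsuit$ forces $l(\rho^n\sigma)$ to have exactly one endpoint on $\square_n\cap\rho^n\sigma$, and admissibility then leaves a single choice for $l(\square_n)$ while at the same time linking the two $\heartsuit$-choices, so that $l(\rho^{n+1}\sigma)=\rho\big(l(\rho^n\sigma)\big)$. The same admissibility argument pins down $l$ on the hexagons bordering the orbit $12$-gons along their $t$-edges, and in the second configuration it pins down $l$ on the intervening hexagon--square--hexagon triples. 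Thus $l$ is completely determined, and $\rho$-invariant, on the orbit together with the cells flanking it.

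Next, for a $12$-gon $\tau$ adjacent to the strip I would use the values of $l$ just computed on the flanking squares and hexagons, together with admissibility, to exclude four suitably placed vertices of $\tau$ from $l(\tau)$. Since $l(\tau)$ is a longest diagonal joining two opposite vertices, and since $\heartsuit$ for $\tau$ forbids exactly the four diameters of $\tau$ whose endpoints do not lie on the two edges of $\tau$ parallel to $e$, it suffices to check that the four excluded vertices furnish one endpoint of each forbidden diameter; as these two vertex sets are disjoint, $l(\tau)$ is then forced to be one of the two $\heartsuit$-admissible diameters. Having established $\heartsuit$ for the $12$-gons immediately flanking the strip, one enlarges the strip and repeats, reaching every $12$-gon by the connectivity of $\Sigma$.

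The main obstacle is the vertex count in this last step. For the hexagon of Lemma~\ref{lem:strips} property $\clubsuit$ forbids a single diameter and for the octagon of Lemma~\ref{lem:strips2} property $\diamondsuit$ forbids two, so excluding one, respectively two, shared vertices sufficed; but $\heartsuit$ forbids four diameters of a $12$-gon, so one must locate four covered vertices of $\tau$, one on each forbidden diameter. Verifying this — in both configurations of the $e$-translation, and for each admissible value of $l$ on the orbit $12$-gons — requires following a longer chain of admissibility deductions across the intervening squares and hexagons, and is the bookkeeping hidden behind the phrase ``it is easy to extend to all''.
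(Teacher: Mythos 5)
The paper gives no proof to compare against (it declares the lemma ``easy'' and omits it), so the only question is whether your plan actually closes. The overall strategy --- determine $l$ on the cells flanking the strip, then exclude one endpoint of each of the four diameters of $\tau$ forbidden by $\heartsuit$ --- is the right one, and you correctly isolate the new difficulty (four forbidden diameters instead of one or two). But two points need attention. First, your claim that ``the same admissibility argument pins down $l$ on the hexagons bordering the orbit $12$-gons along their $t$-edges'' is not justified as stated. An orbit $12$-gon $\rho^n\sigma$ has six bordering hexagons; the four that also touch one of its two strip squares are indeed forced by a single application of admissibility (each has two of its six vertices covered by $l(\rho^n\sigma)$, $l(\square_{n})$, $l(\square_{n-1})$ or $l(\rho^{n\pm1}\sigma)$, leaving one diameter). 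The remaining two ``middle'' hexagons touch no strip square, and admissibility initially kills only one of their three diameters; to force them you must first determine $l$ on the two squares of $\rho^n\sigma$ adjacent to the already-forced hexagons (one constraint propagated from each of the two strip squares of $\rho^n\sigma$), and only the combination of these two chains, arriving from opposite ends of the same side of $\rho^n\sigma$, leaves a single diameter for the middle hexagon. So the determination is true but requires a two-sided squeeze, not a one-step argument.

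Second, the four-vertex verification you defer is the entire content of the lemma, and you should also be careful which diameters of $\tau$ are the forbidden ones: the two edges of $\tau$ parallel to $e$ are the edges of $\tau$ facing $\rho^{\pm1}(\tau)$ (the $\tau$-strip runs parallel to the $\sigma$-strip), not the edges facing $\sigma$ and $\sigma'$. With that in place the count does close in the first configuration: writing $u_9,u_{10},u_{11},u_0,u_1,u_2$ for the six consecutive vertices of $\tau$ on the strip side, the four forbidden diameters are exactly those through $u_0$, $u_1$, $u_{10}$, $u_{11}$; the squares between $\tau$ and $\sigma$, respectively $\sigma'$, are forced to contain $u_0$, respectively $u_{11}$, and the two middle hexagons of $\sigma$ and $\sigma'$ (forced by the squeeze above) contain $u_1$ and $u_{10}$. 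The analogous check in the second configuration, where consecutive orbit $12$-gons are separated by a hexagon--square--hexagon chain and the neighbouring $12$-gons sit differently relative to the strip, is a separate computation that your proposal does not address. As written, then, the proposal is a correct plan with the decisive verifications missing and one intermediate claim under-justified.
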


The proof is easy, it goes along the same lines as the proofs of Lemmas~\ref{lem:strips} and~\ref{lem:strips2} and we omit it.

\begin{proof}[Proof of Proposition~\ref{prop:236}]
We adopt the convention that if we label the vertices of an edge in a 12-gon~$\sigma$ by $v_0v_1$, then all the other vertices of $\sigma$ get cyclically labelled by $v_2\cdots v_{11}$.

Let $\tau$ be a 12-gon and suppose that $l(\tau)$ contains a vertex $v_1$ of an edge $v_0v_1\subset \tau$ for a square $\square=v_0v_1u_1u_0$. Let $\sigma$ be the 12-gon containing $u_0u_1$. Then $l(\square)=u_1v_0$ and furthermore $l$ assigns to the hexagon and square containing $u_1u_2,u_2u_3$, respectively, the longest diagonal containing $u_2,u_3$, respectively. Thus the only three remaining options for $l(\sigma)$ are the diagonals $u_0u_6, u_5u_{11}$, and $u_4u_{10}$. Thus we fall in one of the following three cases.

\smallskip

\noindent\textbf{Case 0.} There is such a $\tau$ with $l(\sigma)=u_4u_{10}$.

\smallskip

Let $\phi$ be the hexagon containing $u_3u_4$. Then $l(\phi)$ is parallel to $u_3u_4$ and there is no admissible choice for $l$ in the square containing $u_4u_5$ (see Figure \ref{Figure_lox_4}). This is a contradiction.

\begin{figure}[H]
\begin{center}
 \scalebox{1}{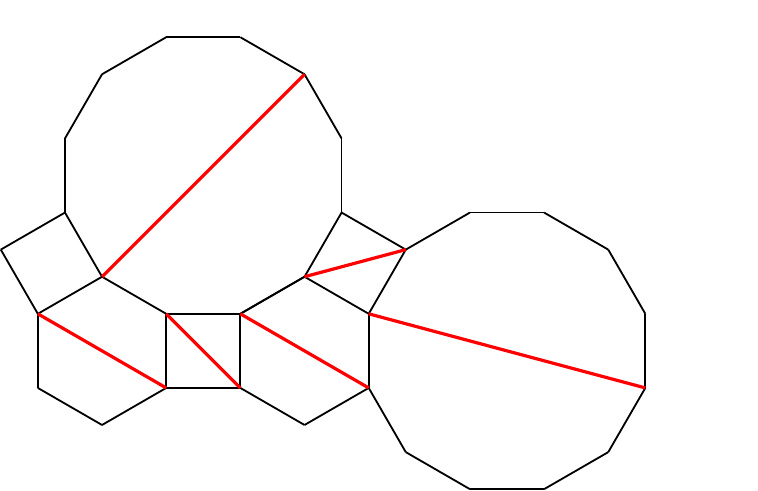}
\caption{}
\label{Figure_lox_4}
\end{center}
\end{figure}

\noindent\textbf{Case 1.} There is such a $\tau$ with $l(\sigma)=u_5u_{11}$.

\smallskip

It is easy to see that $l$ agrees with Figure \ref{Figure_lox_5} on the hexagon containing $v_0v_{11}$ and the square containing $v_{11}v_{10}$. Thus the only $2$-cell $\phi$ with $l(\phi)$ containing $v_{10}$ may be (and is) the hexagon containing $v_{10}v_9$. Consequently the only $2$-cell $\blacksquare$ with $l(\blacksquare)$ containing $v_{9}$ may be (and is) the square containing $v_{9}v_8$. Denote by $\sigma'$ the 12-gon adjacent to both $\phi$ and $\blacksquare$ at the vertex $x\neq v_9$. Then $x$ may lie (and lies) only in~$l(\sigma')$. Denote by $\rho$ the translation mapping $\sigma$ to $\sigma'$.

\begin{figure}[H]
\begin{center}
 \scalebox{1}{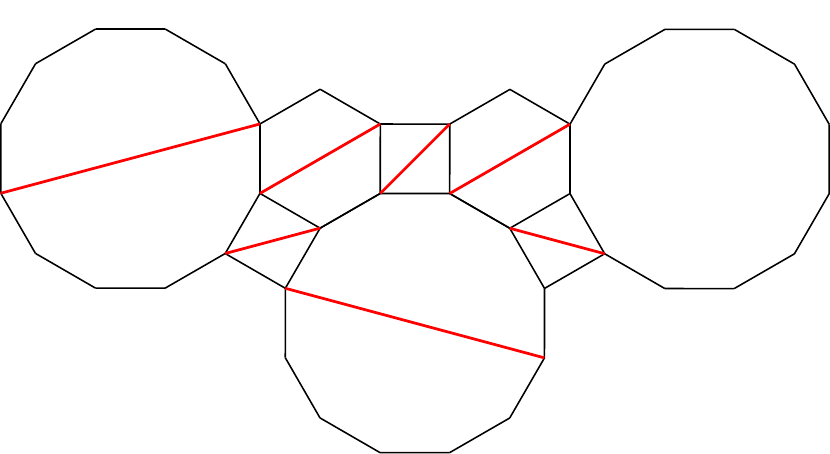}
\caption{}
\label{Figure_lox_5}
\end{center}
\end{figure}

It is also easy to see that the $2$-cells surrounding $\sigma$ and $\rho^{-1}(\tau)$ depicted in Figure~\ref{Figure_lox_6} have $l(\cdot)$ as indicated. This leaves only two choices for $l(\rho^{-1}(\tau))$, where one of them leads to Case~0, and the other satisfies $\heartsuit$.

\begin{figure}[H]
\begin{center}
 \scalebox{1}{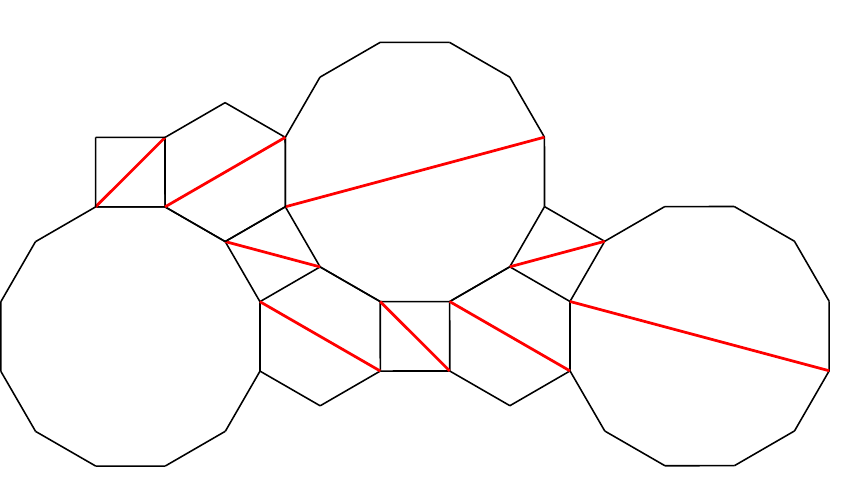}
\caption{}
\label{Figure_lox_6}
\end{center}
\end{figure}

Replacing repeatedly $\sigma$ and $\tau$ in the above argument by $\tau$ and $\sigma'$ or by $\rho^{-1}(\tau)$ and $\sigma$ gives that $\heartsuit$ holds for $\gamma=\rho^{n}(\sigma), \rho^{n}(\tau)$ for all $n\in \Z$. It remains to apply Lemma~\ref{lem:strips3}.

\smallskip
\noindent\textbf{Case 2.} There is no such $\tau$ as in Case~0 or~1.

\smallskip

It is then easy to see that $e=v_0v_1$ and $\rho$ mapping $\sigma$ to $\tau$ satisfy the hypothesis of Lemma~\ref{lem:strips3}.
\end{proof}

Note that for $l$ satisfying $\heartsuit$ for all 12-gons $\gamma$, the values of $l$ on 12-gons determine its values on squares and hexagons.

We are now ready for the following.

\begin{proof}[Proof of Theorem~\ref{thm:main}]
Let $F$ be a Euclidean plane isometrically embedded in $\Phi$ with a proper (and thus cocompact) action of $\Z^2$. By Lemma~\ref{lem:tiling}, the tiling of $F$ is either the standard square tiling, or the one of the Davis complex $\Sigma$ for $W$, where $W$ is the triangle Coxeter group with exponents $\{3,3,3\},\{2,4,4\}$ or $\{2,3,6\}$.

First consider the case where the tiling of $F$ is the standard square tiling. We can partition the set of edges into two classes \emph{horizontal} and \emph{vertical} of parallel edges. Let $T$ be the set of types of horizontal edges and $T'$ be the set of types of vertical edges. Since for each square the type of its two horizontal (respectively, vertical) edges is the same, we have $m_{tt'}=2$ for all $t\in T, t'\in T'$. Moreover, by Remark~\ref{rem:AS}, if one of the edges is of form $g,gt^k$, then the other is of form $h,ht^k$. Thus, up to a conjugation, the stabiliser of $F$ in $A_S$ is generated by a horizontal translation $w\in A_T$ and a vertical translation $w'\in A_{T'}$. This brings us to Case~(a) in Theorem~\ref{thm:main}.

It remains to consider the case where the tiling of $F$ is the one of $\Sigma$. Consider its induced polarisation $l$ from Definition~\ref{def:induced_polarisation}. By Lemma~\ref{lem:adm}, $l$ is admissible. By Propositions~\ref{prop:333}, \ref{prop:244}, and~\ref{prop:236}, there is an edge $e$ such that each for each $\gamma$ a maximal size $2$-cell, the diagonal $l(\gamma)$ has endpoints on edges of $\gamma$ parallel to $e$, and there is a particular translation $\rho$ in the direction perpendicular to $e$ preserving $l$.

For an edge $f$ of type $\{s\}$ in $\Sigma$, its vertices are of form $g,gs^k$ for $k>0$, directed from $g$ to $gs^k$. If $k>1$, then we call $f$ \emph{$k$-long}. By Lemma~\ref{lem:AS2} and Remark~\ref{rem:AS}, if $f$ is $k$-long, then so is its opposite edge in both of the $2$-cells that contain $f$. Consequently all the edges crossing the bisector of $f$ are $k$-long. Moreover, all such bisectors are parallel, since otherwise the $2$-cell $\square$ where they crossed would have four long edges, so $\square$ would be a square by Lemma~\ref{lem:AS2}. Analysing $l$ in the 2-cells adjacent to $\square$ leaves then no admissible choice for $l(\square)$.

Furthermore, by Lemmas~\ref{lem:AS2},~\ref{lem:strips},~\ref{lem:strips2} and~\ref{lem:strips3}, if $f$ is a long edge, then we can assume that $f$ is parallel to $e$.

Suppose first that $W_T$ is the triangle Coxeter group with exponents $\{3,3,3\}$ and $T=\{s,t,r\}$. Let $\omega$ be a combinatorial axis for the action of $\rho$ on $\Sigma$. Since none of the edges of $\omega$ are parallel to $e$, by the definition of the induced polarisation we see that they are all directed consistently (see Figure~\ref{Figure_lox_7}).

\begin{figure}[H]
\begin{center}
 \scalebox{0.8}{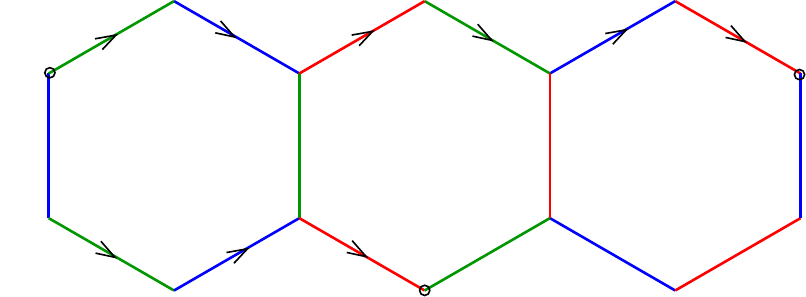}
\caption{Dashed lines indicate possible polarisations.}
\label{Figure_lox_7}
\end{center}
\end{figure}

Thus, up to replacing $F$ by its translate and interchanging $t$ with $r$, the element $strstr$ preserves $\omega$, and coincides on it with $\rho^3$. In fact, $\rho^3$ not only preserves the types of edges, but also by Lemma~\ref{lem:AS2} their direction and $k$-longness. Thus $strstr$ preserves $F$. The second generator of the type preserving translation group of $\Sigma$ in $W_T$ is $tstr$. Note that the path representing it in $\Sigma\subset \Phi$ corresponds to a word in $t^\bullet str\subset A_T$. That word depends on whether the second edge of the path is long and on the polarisation. Since $\Z^2$ acts cocompactly, there is a power of $tstr$ such that its corresponding path in $\Sigma\subset \Phi$ reads off a word in $(t^\bullet str)^*\subset A_T$ that is the other generator of the orientation preserving stabiliser of $F$ in $A_S$. This brings us to Case~(b) in Theorem~\ref{thm:main}. One similarly obtains the characterisations of orientation preserving stabilisers of $F$ for the two other $W$ (see Figures \ref{Figure_lox_8} and \ref{Figure_lox_9}).

\begin{figure}[H]
\begin{center}
 \scalebox{0.9}{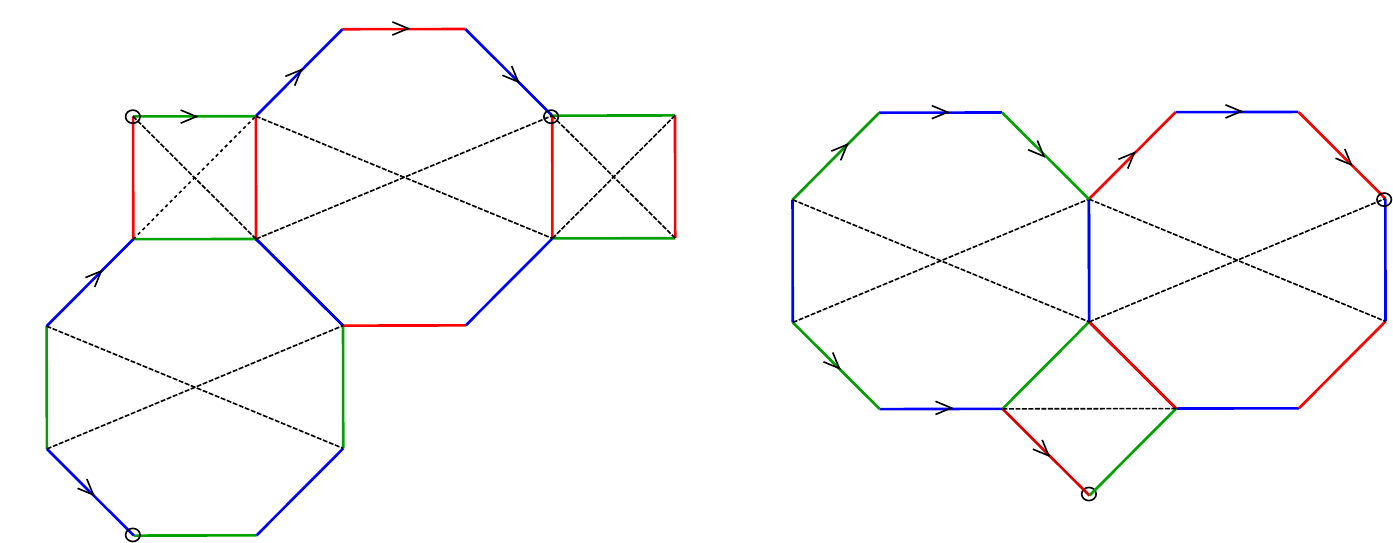}
\caption{}
\label{Figure_lox_8}
\end{center}
\end{figure}

\begin{figure}[H]
\begin{center}
 \scalebox{0.9}{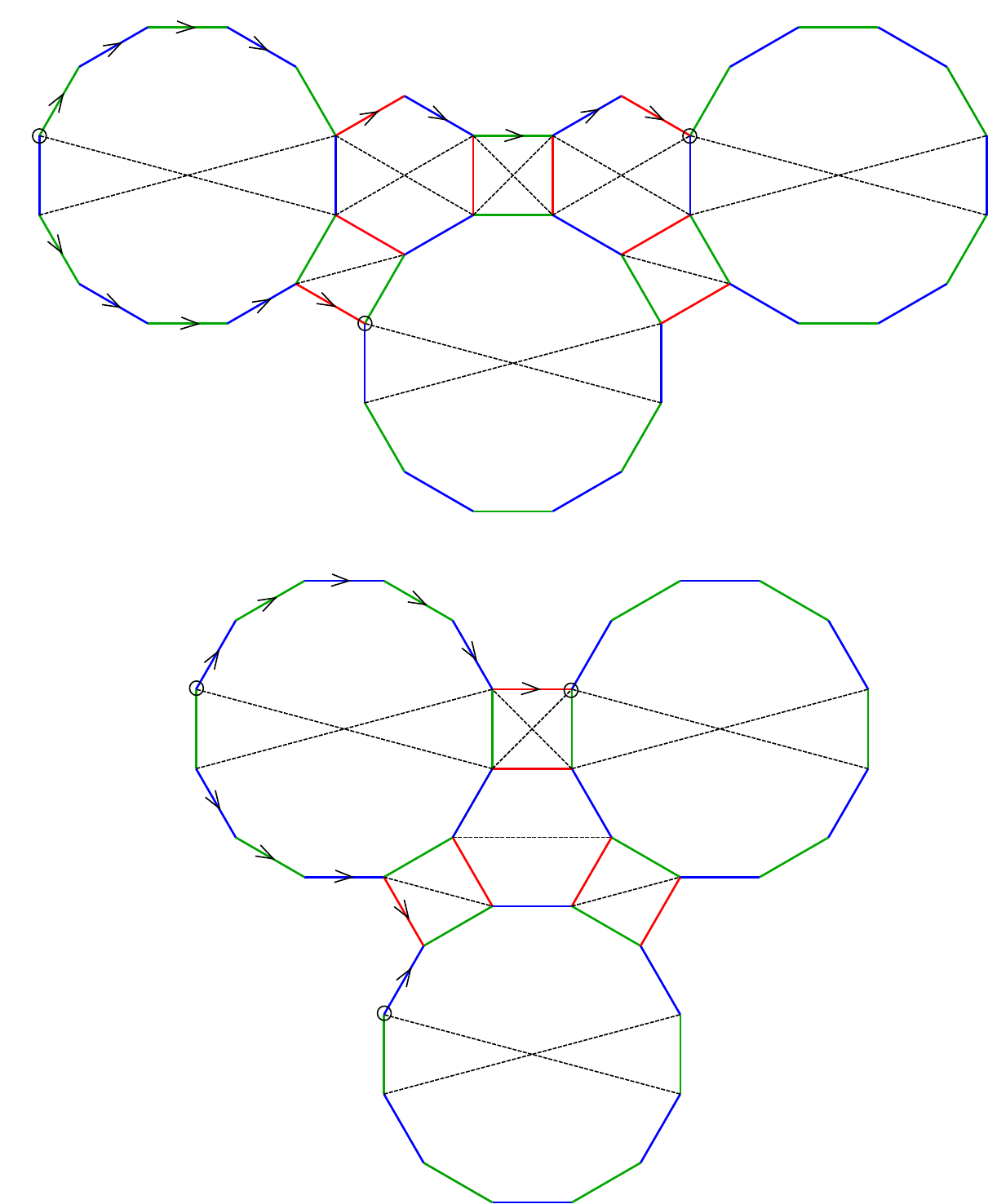}
\caption{}
\label{Figure_lox_9}
\end{center}
\end{figure}
\end{proof}

\begin{rem}
\label{rem:klein}
Analysing the full stabilisers of $F$ in $A_S$ one can easily classify also the subgroups of $A_S$ acting properly on $\Phi$ isomorphic to the fundamental group of the Klein bottle. For example, suppose that the second generator of the $\Z^2$ in Case~(b) of Theorem~\ref{thm:main} has the form
$g=(t^kstr)(t^{-k}str)$. Then our $\Z^2$ is generated by $strstr$ and $g'=g(strstr)^{-1}=t^ksr^{-k}s^{-1}$. Note that $str$ normalises our $\Z^2$ with
$(str)^{-1}g'(str)=(g')^{-1}$. Thus $\langle str, g'\rangle$ is isomorphic to the fundamental group of the Klein bottle. We do not include a full classification, since it is not particularly illuminating.
\end{rem}

\begin{bibdiv}
\begin{biblist}

\bib{AS}{article}{
   author={Appel, K. I.},
   author={Schupp, P. E.},
   title={Artin groups and infinite Coxeter groups},
   journal={Invent. Math.},
   volume={72},
   date={1983},
   number={2},
   pages={201--220}}

\bib{B}{article}{
   author={Bridson, Martin R.},
   title={On the semisimplicity of polyhedral isometries},
   journal={Proc. Amer. Math. Soc.},
   volume={127},
   date={1999},
   number={7},
   pages={2143--2146}}

\bib{BH}{book}{
   author={Bridson, M. R.},
   author={Haefliger, A.},
   title={Metric spaces of non-positive curvature},
   series={Grundlehren der Mathematischen Wissenschaften [Fundamental
   Principles of Mathematical Sciences]},
   volume={319},
   publisher={Springer-Verlag},
   place={Berlin},
   date={1999}
   }

\bib{CD}{article}{
   author={Charney, R.},
   author={Davis, M. W.},
   title={The $K(\pi,1)$-problem for hyperplane complements associated to
   infinite reflection groups},
   journal={J. Amer. Math. Soc.},
   volume={8},
   date={1995},
   number={3},
   pages={597--627}}

\bib{Del}{article}{
   author={Deligne, P.},
   title={Les immeubles des groupes de tresses g\'en\'eralis\'es},
   language={French},
   journal={Invent. Math.},
   volume={17},
   date={1972},
   pages={273--302}}

\bib{HJP}{article}{
   author={Huang, J.},
   author={Jankiewicz, K.},
   author={Przytycki, P.},
   title={Cocompactly cubulated 2-dimensional Artin groups},
   journal={Comment. Math. Helv.},
   volume={91},
   date={2016},
   number={3},
   pages={519--542}}

\bib{OH2}{report}{
    AUTHOR = {Huang, J.},
    author = {Osajda, D.}
   title={Quasi-Euclidean tilings over 2-dimensional Artin groups and their applications},
   eprint={arXiv:1711.00122},
   date={2017}}

\bib{L}{article}{
   author={van der Lek, H.},
   title={The homotopy type of complex hyperplane complements},
   status={Ph.D. thesis},
   date={1983},
   journal={University of Nijmegan}}

\bib{MP}{article}{
   AUTHOR = {Martin, A.},
   AUTHOR = {Przytycki, P.},
   TITLE = {Acylindrical actions for two-dimensional Artin groups of hyperbolic type},
   eprint={arXiv:1906.03154},
   status={submitted},
   YEAR = {2019}}

\bib{R}{book}{
   author={Ronan, Mark},
   title={Lectures on buildings},
   series={Perspectives in Mathematics},
   volume={7},
   publisher={Academic Press, Inc., Boston, MA},
   date={1989},
   pages={xiv+201}}

\end{biblist}
\end{bibdiv}

\end{document}